\newcommand{\Conv}{\mathop{\scalebox{1.9}{\raisebox{-0.2ex}{$\Asterisk$}}}}%
\newtheorem{thm}{Theorem}[section]
\newtheorem{cor}[thm]{Corollary}
\newtheorem{lem}[thm]{Lemma}
\newtheorem{prop}[thm]{Proposition}
\theoremstyle{definition}
\newtheorem{dfn}[thm]{Definition}
\newtheorem{rem}[thm]{Remark}
\setlist[enumerate]{itemsep=0pt,label=$(\mathrm{\roman*})$, topsep=1pt}
\setlist[itemize]{itemsep=0pt, topsep=1pt, labelindent=\parindent,leftmargin=*}
\newcommand\enclosebox[2]{%
  \BeforeBeginEnvironment{#1}{\begin{#2}}%
  \AfterEndEnvironment{#1}{\end{#2}}%
}
\newcommand{\Aut}{\operatorname{Aut}}
\newcommand{\ab}{\mathrm{ab}}
\newcommand{\abar}{\ol{a}}
\newcommand{\Ar}{\operatorname{Ar}}
\newcommand{\Cf}{\textit{cf.}\;}
\newcommand{\Cbar}{\overline{C}}
\newcommand{\Cpbar}{\overline{C'}}
\newcommand{\Cu}{\operatorname{Cu}}
\newcommand{\CuX}{\Cu(X)}
\newcommand{\CuXp}{\Cu(X')}
\newcommand{\Cov}{\operatorname{Cov}}
\newcommand{\Dif}{\mathfrak{D}}
\newcommand{\DifKp}{\Dif_{K'/K}}
\newcommand{\DifL}{\Dif_{L/K}}
\newcommand{\Div}{\operatorname{Div}^+}
\newcommand{\DivX}{\Div_{Z}(\Xbar)}
\newcommand{\DivXp}{\Div_{Z'}(\Xpbar)}
\newcommand{\fbar}{\ol{f}}
\newcommand{\fibar}{\ol{f_i\!}\,}
\newcommand{\fibarast}{\fibar^{\,\ast}}
\newcommand{\filD}{\operatorname{fil}_{D}}
\newcommand{\filDp}{\operatorname{fil}_{D'}}
\newcommand{\filDHX}{\filD\HX}
\newcommand{\filDpHX}{\filDp\HX}
\newcommand{\Fx}{\F_{\xbar}}
\newcommand{\fbarast}{\ol{f}^{\,\ast}}
\newcommand{\fbarastD}{\fbarast D}
\newcommand{\Fbar}{\overline{F}}
\newcommand{\F}{\mathscr{F}}
\newcommand{\gbarast}{\ol{g}^{\,\ast}}
\newcommand{\Gal}{\operatorname{Gal}}
\newcommand{\GL}{\operatorname{GL}}
\newcommand{\gbar}{\ol{g}}
\newcommand{\Hom}{\operatorname{Hom}}
\newcommand{\HX}{H^1(X,\QZ)}
\newcommand{\inj}{\hookrightarrow}
\newcommand{\isomto}{\stackrel{\simeq}{\to}}
\newcommand{\Ker}{\operatorname{Ker}}
\newcommand{\kU}{k(U)}
\newcommand{\kX}{k(X)}
\newcommand{\kXz}{\kX_z}
\newcommand{\kXbar}{\ol{\kX}}
\newcommand{\kXzbar}{\ol{\kXz}}
\newcommand{\Kbar}{\overline{K}}
\newcommand{\kbar}{\overline{k}}
\newcommand{\op}{\mathrm{op}}
\newcommand{\onto}[1]{\stackrel{#1}{\to}}
\newcommand{\ol}[1]{\overline{#1}}
\newcommand{\pbar}{\ol{p}}
\newcommand{\phiast}{\phi^{\ast}}
\newcommand{\phibar}{\overline{\phi}}
\newcommand{\phipbar}{\ol{\phi'}}
\newcommand{\phiastF}{\phiast\F}
\newcommand{\phibarast}{{\overline{\phi}}^{\,\ast}}
\newcommand{\phipbarast}{{\overline{\phi'}}^{\,\ast}}
\newcommand{\phibarastD}{\phibarast\!D}
\newcommand{\piCa}{\pi_1(C,\ol{a})}
\newcommand{\piCDa}{\pi_1(C,\phibarastD, \ol{a})}
\newcommand{\piU}{\pi_1(U)}
\newcommand{\piX}{\pi_1(X)}
\newcommand{\piXx}{\pi_1(X,\xbar)}
\newcommand{\piXxi}{\pi_1(X,\xibar)}
\newcommand{\piXD}{\pi_1(X,D)}
\newcommand{\piXDx}{\pi_1(X,D,\xbar)}
\newcommand{\piXDxi}{\pi_1(X,D,\xibar)}
\newcommand{\piXxSets}{(\piXx\mbox{-sets})} 
\newcommand{\piXpxSets}{(\pi_1(X',\xpbar)\mbox{-{sets}})}
\newcommand{\piXbarx}{\pi_1(\Xbar,\xbar)}
\newcommand{\piXDabg}{\pi_1(X,D)^{\ab,0}}
\newcommand{\piXab}{\pi_1(X)^{\ab}}
\newcommand{\piXDab}{\pi_1(X,D)^{\ab}}
\newcommand{\Ql}{\Q_l}
\newcommand{\QZ}{\Q/\Z}
\newcommand{\Qlbar}{\overline{\Ql\!}\,}
\newcommand{\red}{\mathrm{red}}
\newcommand{\Rr}{\mathcal{R}_r}
\newcommand{\RrX}{\Rr(X)}
\newcommand{\rhobar}{\overline{\rho}}
\newcommand{\Sr}{\mathcal{S}_r}
\newcommand{\Sw}{\operatorname{Sw}}
\newcommand{\SwV}{\Sw(V)}
\newcommand{\SwF}{\Sw(\F)}
\newcommand{\SwzF}{\Sw_z(\F)}
\newcommand{\ssm}{\smallsetminus}
\newcommand{\Spec}{\operatorname{Spec}}
\newcommand{\surj}{\twoheadrightarrow}
\newcommand{\ul}[1]{\underline{#1}}
\newcommand{\Xred}{X_{\red}}
\newcommand{\Xbar}{\overline{X}}
\newcommand{\Xbarred}{\Xbar_{\red}}
\newcommand{\Xpbar}{\overline{X'}}
\newcommand{\xbar}{\overline{x}}
\newcommand{\xpbar}{\overline{x'}}
\newcommand{\xibar}{\overline{\xi}}
\newcommand{\Zhat}{\widehat{\Z}}
\newcommand{\WXx}{W(X,\xbar)}
\newcommand{\wt}[1]{\widetilde{#1}}
\newcommand{\R}{\mathbb{R}}
\newcommand{\Z}{\mathbb{Z}}
\newcommand{\Q}{\mathbb{Q}}
\newcommand\sn{\smallskip\noindent}
\title{A Hermite-Minkowski type theorem of varieties over finite fields}
\author{Toshiro Hiranouchi}
\begin{document}
\pagenumbering{arabic}
\maketitle

\begin{abstract}
We show  
the finiteness of \'etale coverings of a variety over a finite field  
with given degree 
whose ramification bounded along an effective Cartier divisor. 
The proof is an application of P.~Delgine's theorem \cite{EK} 
(H.~Esnault and M.~Kerz in Acta Math.\ Vietnam.\ 37:531--562, 2012) 
on a finiteness of $l$-adic sheaves 
with restricted ramification.  
By applying our result to a smooth curve over a finite field, 
we obtain a function field analogue of the classical Hermite-Minkowski theorem.
\end{abstract}

\section{Introduction}
For a number field $F$, that is, a finite extension of $\Q$, 
the Hermite-Minkowski theorem 
asserts that 
there exist only finitely many extensions of the number field $F$ 
with given degree unramified outside a finite set of primes of $F$ 
(e.g., \cite{N}, Chap.~III, Thm.~2.13; \cite{F}, Chap.~V, Thm.~2.6). 
In \cite{F}, 
G.~Faltings gave  
a higher dimensional generalization of this theorem stated as follows:

\begin{thm}[\cite{F}, Chap.~VI, Sect.~2.4; \cite{HH}, Thm.~2.9]
\label{thm:Faltings}
Let 
$X$ be a connected scheme of finite type and dominant (e.g., flat) over\/ 
$\Spec(\Z)$. 
Then there exist only finitely many \'etale coverings of $X$ 
with given degree. 
\end{thm}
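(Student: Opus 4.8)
The plan is to reduce the statement to the classical Hermite--Minkowski theorem for number fields. Since a degree-$n$ covering of $X$ amounts to a finite $\pi_1(X)$-set of cardinality $n$, hence to a disjoint union of transitive such sets, it suffices to bound, for each $n$, the number of open subgroups of $\pi_1(X)$ of index $n$. First I would carry out the standard reductions: replacing $X$ by $X_{\red}$ changes nothing; passing to the normalizations of the irreducible components of $X$ --- all horizontal over $\Spec\Z$ by hypothesis --- and recording the finitely many gluing isomorphisms along their pairwise intersections reduces the problem, up to a factor bounded in terms of $n$ and $X$, to the case of a normal integral scheme; and since $\pi_1(U)\surj\pi_1(X)$ for every dense open $U\subseteq X$ --- so that pullback embeds the degree-$n$ coverings of $X$ into those of $U$ --- we may shrink $X$ and assume it smooth and surjective over $\Spec\Z[1/N]$ for some integer $N$, and even geometrically connected over $\Spec\mathcal{O}_{F_0}[1/N]$, where $F_0$ is the algebraic closure of $\Q$ in the function field of $X$.

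Next I would fix a connected finite \'etale covering $f\colon Y\to X$ of degree $n$ and set $F$ to be the algebraic closure of $\Q$ in the function field of $Y$, so that $[F:F_0]\le n$. The first key point --- and the first place where a spreading-out argument enters --- is that $F/\Q$ is unramified outside $N$: heuristically, being \'etale over $X$, the scheme $Y$ is smooth and surjective over $\Spec\Z[1/N]$, so its fibres over primes $q\nmid N$ are smooth, hence reduced and non-empty, whereas a prime $q\nmid N$ ramifying in $F$ would force, through the natural morphism $Y\to\Spec\mathcal{O}_F[1/N]$ together with the flatness of $Y$ over $\mathcal{O}_F[1/N]$, a non-reduced contribution there; I return below to making this rigorous. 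Granting it, the classical Hermite--Minkowski theorem leaves only finitely many possibilities for $F$, and I fix one. Then $\Spec\mathcal{O}_F[1/N]\to\Spec\Z[1/N]$ is finite \'etale, so $X_F:=X\otimes_{\mathcal{O}_{F_0}}\mathcal{O}_F$ is a finite \'etale covering of $X$, and replacing $Y\to X$ by $Y\to X_F$ --- which I again denote $Y\to X$ --- we may assume $f$ geometrically connected over $\mathcal{O}_F$ and of degree $\le n$.

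It then remains to bound the coverings $Y\to X$ with constant field the fixed $F$ and geometrically connected over $\mathcal{O}_F$. Such a $Y$ is determined by its generic fibre, a degree-$\le n$ finite \'etale covering $Y_F$ of the smooth, geometrically connected $F$-variety $V:=X\otimes_{\mathcal{O}_F}F$; over $\ol F\subseteq\mathbb{C}$ the base change $V_{\ol F}$ is a smooth connected variety, so $\pi_1(V_{\ol F})$ is topologically finitely generated --- being the profinite completion of the fundamental group of a finite CW-complex --- and there are only finitely many possibilities, up to isomorphism, for the geometric covering $\ol W:=(Y_F)_{\ol F}$; I fix one. The coverings with this $\ol W$ inject into the set of $\Gal(\ol F/F)$-forms of $\ol W\to V_{\ol F}$, classified by the pointed cohomology set $H^1(\Gal(\ol F/F),A)$ with $A:=\Aut_{V_{\ol F}}(\ol W)$ a finite group; the hypothesis that $Y$ extend to a finite \'etale covering of $X$ --- necessarily smooth over $\Spec\Z[1/N]$ --- forces, by a second spreading-out argument, the classifying cocycle to be unramified outside $N$. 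Restricted cohomology sets of this kind are finite: d\'evissage along a composition series of $A$ reduces the statement to finite abelian coefficients, where it follows from the topological finite generation, by class field theory, of the Galois group of the maximal abelian extension of $F$ unramified outside $N$ (after a controlled finite extension trivialising the action) --- that is, once more from Hermite--Minkowski. Multiplying the finitely many choices of $F$, of $\ol W$, and of the form then bounds the number of connected, hence of all, degree-$n$ coverings of $X$.

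The step I expect to be the main obstacle is precisely the two spreading-out assertions used above: that the field of constants of a finite \'etale covering of $X$, and the Galois cocycle encoding its geometric part, are ramified over $\Spec\Z$ only at primes dividing the fixed integer $N$. Some such bound genuinely has to be forced, since a single positive-characteristic fibre of $X$ already carries infinitely many degree-$n$ coverings, produced by wild ramification along the boundary it acquires over that field; one cannot argue fibre by fibre, but must instead exploit that an \'etale covering of $X$ has no boundary at all along which to ramify, so that its reductions remain smooth and its arithmetic stays moderately ramified --- and the non-properness of $X$ is what makes turning this slogan into precise specialization statements delicate. This is exactly the phenomenon that, in the sheaf-theoretic setting, is controlled by P.~Deligne's finiteness theorem for $\ell$-adic local systems with bounded ramification (\cite{EK}); it is by building on that theorem that the present paper can treat ramification bounded along a prescribed effective divisor in place of none.
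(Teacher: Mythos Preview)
The paper does not prove this statement: Theorem~\ref{thm:Faltings} is quoted as a known result, with references to \cite{F} and \cite{HH}, and serves only as motivation for the function-field analogue that is the paper's actual contribution (Thm.~\ref{thm:intro} $=$ Thm.~\ref{thm:main}). There is therefore no proof in the paper against which to compare your proposal.

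For what it is worth, your outline is essentially the standard argument of the cited references, and is correct in broad strokes. A more economical packaging of the two spreading-out steps you flag as the main obstacle is to work directly with the homotopy sequence of fundamental groups rather than with individual coverings: after your reductions, with $X$ smooth over $S=\Spec\mathcal{O}_{F_0}[1/N]$ and with geometrically connected generic fibre, one has a surjection $\pi_1(X)\surj\pi_1(S)$ whose kernel is the normal closure of the image of $\pi_1(X_{\ol{F_0}})$. The target is small by the classical Hermite--Minkowski theorem, the image of $\pi_1(X_{\ol{F_0}})$ is topologically finitely generated by comparison with the complex topology, and an extension of a small profinite group by a topologically finitely generated one is again small --- this is exactly how \cite{HH}, Thm.~2.9 is organised. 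Your separate treatment of the field of constants and of the $H^1$ classifying the geometric twist amounts to unwinding this sequence by hand; the unramifiedness-outside-$N$ statements you need are precisely the surjectivity of $\pi_1(X)\to\pi_1(S)$ together with the identification of its kernel, both of which are instances of SGA1, Exp.~X and XIII, and do not require any ad hoc specialisation arguments.
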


\noindent
Here, an \textbf{\'etale covering} of $X$ means a finite \'etale morphism $X'\to X$. 
The aim of this note is to give a ``function field'' analogue of this theorem. 
We begin simple observations:

\begin{itemize}
\renewcommand{\labelitemi}{$\circ$}
  \item 
  For a function field  $F$  
  of one variable over a finite field with characteristic $p$, 
  the Artin-Schreier equations produce infinitely many extensions
  of $F$ of degree $p$ which ramify only in a finite set of places (see e.g., \cite{G}, Sect.~8.23). 
  \item For a number field $F$,  
  (the exponents of) the discriminant of an extension of $F$ has an upper bound 
  depending on the extension degree and the primes at which it ramifies 
(\cite{SerCL}, Chap.~III, Sect.~6, Prop.~13, see also remarks after the proposition). 
Under the conditions in the Hermite-Minkowski theorem, namely, the extension degree and 
a finite set of primes are given,   
the discriminants of extensions of $F$ are automatically bounded. 
\end{itemize}
Considering these facts together, 
to obtain a finiteness as above in the case of function fields 
we have to restrict ramification.

Now, we present the results in this note more precisely.  
Let 
$X$ be a connected and separated scheme of finite type over a finite field 
(we call such schemes just \textbf{varieties} in the following 
\Cf Notation), and 
$\Xbar$ a compactification of $X$ (\Cf Sect.~\ref{sec:ram}). 
For an effective Cartier divisor $D$ with support 
$|D| \subset Z = \Xbar \ssm X$, 
we will introduce the notion of 
\emph{bounded ramification along $D$} for \'etale coverings of $X$ 
(whose ramification locus is in the boundary $Z$) 
in the next section (Def.~\ref{def:rambdd}). 
Adopting this notion, we show the following theorem.

\begin{thm}[Thm.~\ref{thm:main}]
\label{thm:intro}
Let $X\subset \Xbar$ be as above. 
There exist only finitely many 
\'etale coverings of\/ $X$ with bounded degree and ramification bounded by 
a given effective Cartier divisor $D$ with support in $Z=\Xbar\ssm X$.  
\end{thm}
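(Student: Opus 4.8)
The plan is to reduce the statement to Deligne's finiteness theorem (as stated by Esnault--Kerz in \cite{EK}) on $\ell$-adic local systems of bounded rank with bounded ramification. An étale covering $f\colon X'\to X$ of degree $\le n$ gives rise to the finite locally constant sheaf $f_*\underline{\Z/\ell^N}$ (or, more conveniently, the permutation representation of $\pi_1(X,\xbar)$ on the fiber $f^{-1}(\xbar)$, viewed after choosing an embedding $\mathfrak{S}_n \hookrightarrow \GL_n$ as an $\ell$-adic local system $V_f$ of rank $\le n$). The covering $X'$ is recovered from this representation together with its monodromy action, so it suffices to show that only finitely many such $V_f$ arise — and for this I will invoke the Esnault--Kerz form of Deligne's theorem, which bounds the number of irreducible $\Qlbar$-local systems on $X$ of given rank whose ramification (measured by the ramification filtration along the boundary) is bounded by a fixed divisor.

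First I would verify the ramification bookkeeping: if $f\colon X'\to X$ has ramification bounded by $D$ in the sense of Definition \ref{def:rambdd}, then the associated representation $V_f$ has Swan conductor (equivalently, lies in the ramification filtration $\filD$) bounded in terms of $D$ and $n=\deg f$ only. This is essentially the compatibility of the ramification filtration with push-forward along a finite map, together with the elementary fact that the Swan conductor of a permutation representation of degree $n$ is bounded by $n$ times the largest break appearing, and the breaks of $f_*\underline{\Lambda}$ are controlled by those of $f$. Concretely, bounded ramification along $D$ should be designed (in Section \ref{sec:ram}) precisely so that this step is a matter of unwinding definitions; I would cite that section.

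Next I would apply Deligne's/Esnault--Kerz's finiteness: there are only finitely many isomorphism classes of semisimple $\ell$-adic representations $\rho$ of $\pi_1(X,\xbar)$ of rank $\le n$ with $\filD$-bounded ramification, because each decomposes into finitely many irreducible constituents each drawn from a finite list. Since $V_f$ is a representation valued in the finite group $\mathfrak{S}_n$, it takes values in $\GL_n$ of a number ring in a cyclotomic field and is in particular semisimple up to enlarging $N$; in any case the finite-image representations among a finite set of $\Qlbar$-points of the relevant moduli/coarse space are themselves finite in number (the finite quotients of $\pi_1(X,\xbar)$ cut out by such $\rho$ form a finite set). Hence only finitely many open subgroups of $\pi_1(X,\xbar)$ of index $\le n$ with the prescribed ramification bound occur, and therefore only finitely many étale coverings $X'\to X$.

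The main obstacle will be the passage from ``finitely many $\Qlbar$-local systems'' to ``finitely many finite-index subgroups / finitely many coverings.'' Deligne's theorem is a statement about $\ell$-adic local systems with coefficients in a fixed finite extension of $\Ql$, and a priori a covering of degree $n$ could have its monodromy representation defined only over a large extension; one must control the field of definition (it lies in $\Q(\zeta_{n!})\otimes\Ql$ or similar, since the image is finite) and also ensure that two non-isomorphic coverings cannot yield isomorphic local systems in a way that escapes the count — but distinct connected coverings give distinct transitive permutation representations, and non-connected ones are built from connected ones of smaller degree, so an induction on $n$ closes this gap. I would also need to handle the ramification locus being exactly the boundary $Z$ (not a smaller set), which is automatic here since coverings with bounded ramification along $D$ are by definition étale over $X$; so the only delicate point really is the coefficient-field and semisimplification argument, which I would treat carefully in the full proof.
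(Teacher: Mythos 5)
Your core finiteness argument is the same as the paper's: attach to each covering of degree $\le n$ a $\Qlbar$-local system of rank $\le n$ with finite image, check via the bookkeeping lemma (Lem.~\ref{lem:bdd}) that ramification bounded by $D$ forces $\Sw \le nD$, and invoke the Esnault--Kerz form of Deligne's theorem. (The paper organizes this slightly more cleanly: it reformulates the statement as \emph{smallness} of $\piXD$, reduces by Prop.~\ref{prop:small}~(i) to counting open \emph{normal} subgroups $N$ of index $r$, and uses the regular representation of $\piXD/N$; since that representation is faithful on $\piXD/N$, distinct $N$ give distinct sheaves and your worries about recovering coverings from local systems and about fields of definition evaporate --- everything lives in $\Qlbar$-Weil sheaves from the start. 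Also note that Thm.~\ref{thm:Deligne} requires a torsion condition $\det(\F)^{\otimes n}=1$ in addition to the Swan bound; you never mention it, though for finite-image representations it is automatic.)

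The genuine gap is that Deligne's theorem as stated (Thm.~\ref{thm:Deligne}) applies only when $X$ is \emph{smooth} and $\Xbar$ is a \emph{normal} compactification whose boundary $Z$ is the support of an effective Cartier divisor, whereas the statement you are proving concerns an arbitrary variety (possibly non-reduced, reducible, singular, non-normal) with an arbitrary Nagata compactification. The bulk of the paper's proof of Thm.~\ref{thm:main} is a chain of reductions --- to reduced, to integral, to normal, to smooth, to $Z=|E|$ via a blow-up, and finally to a normal compactification --- each step exhibiting $\piXD$ as a quotient of a free product of groups $\pi_1(X_i,D_i)$ with a finitely generated free profinite group (via the descent/Seifert--van Kampen theorem of SGA1 Exp.~IX and Stix), together with the permanence of smallness under quotients and free products (Prop.~\ref{prop:small}~(ii),(iii)) and the existence of the pulled-back divisors (Lem.~\ref{lem:divisor}, Lem.~\ref{lem:pull-back}). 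Your proposal silently assumes the good case; without these reductions the appeal to \cite{EK} is not licensed, and they are not a formality --- for instance, on a reducible or non-normal $X$ one cannot even restrict an irreducible local system to ``the'' function field, and the boundary of a general compactification need not carry any Cartier divisor with support exactly $Z$.
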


\noindent
A key ingredient for the proof is (a weak form of) 
Deligne's finiteness theorem 
on smooth Weil sheaves with bounded ramification  
\cite{EK} (Thm.~\ref{thm:Deligne}).  

For an \'etale covering $X' \to X$ 
of smooth \emph{curves} over a finite field, 
if its  degree and the discriminant  
are bounded, then 
the ramification of the covering $X' \to X$ in our sense 
is also bounded (Prop.~\ref{prop:disc}): 
\begin{center}
	 bounded degree \& discriminant $\Rightarrow$ bounded ramification. 
\end{center}
From this, 
we obtain an alternative proof of the following well-known theorem: 

\begin{cor}[\cite{G}, Thm.~8.23.5]
\label{thm:Taguchi}
Let 
$F$ be a function field of one variable over a finite field. 
Then 
there exist only finitely many separable extensions of $F$ 
with bounded degree and discriminant. 
\end{cor}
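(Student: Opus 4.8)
The plan is to deduce Corollary~\ref{thm:Taguchi} from Theorem~\ref{thm:intro} by a standard dictionary between separable extensions of the function field $F$ and \'etale coverings of a suitable curve, together with Proposition~\ref{prop:disc}, which converts a bound on the discriminant into a bound on ramification along a divisor. First I would fix the smooth projective model: let $\Xbar$ be the unique smooth projective curve over the finite field of constants $k$ with function field $F$. For a separable extension $F'/F$ of degree $\le n$, let $\Xbar'$ be the smooth projective model of $F'$, so that the inclusion $F \hookrightarrow F'$ induces a finite morphism $\phi\colon \Xbar' \to \Xbar$ of degree $[F':F]\le n$; this morphism is \'etale over the open locus where $\phi$ is unramified, and its ramification is recorded precisely by the different, hence by the discriminant divisor $\mathfrak d_{F'/F}$ on $\Xbar$.

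Next I would package the discriminant bound as a single divisor. A bound on the degree $[F':F]$ and on $\deg \mathfrak d_{F'/F}$ (equivalently, on the discriminant) forces the support of $\mathfrak d_{F'/F}$ to lie in a fixed finite set of closed points and forces its multiplicities to be bounded; so there is one effective divisor $D_0$ on $\Xbar$, depending only on $n$ and the discriminant bound, with $\mathfrak d_{F'/F} \le D_0$ for every such $F'/F$. Set $Z = |D_0|$ and $X = \Xbar \ssm Z$. Then each $\phi$ restricts to an \'etale covering $X' = \phi^{-1}(X) \to X$ of degree $\le n$, and Proposition~\ref{prop:disc} (''bounded degree \& discriminant $\Rightarrow$ bounded ramification'') shows that the ramification of $X' \to X$ is bounded by a fixed effective Cartier divisor $D$ with support in $Z$, again depending only on $n$ and the discriminant bound. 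Note that one must enlarge $Z$ slightly if necessary so that it is nonempty, i.e.\ so that $X$ is affine and Theorem~\ref{thm:intro} applies; since $F$ has infinitely many places this is harmless.

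Now Theorem~\ref{thm:intro} applies verbatim to $X \subset \Xbar$ with the divisor $D$: there are only finitely many \'etale coverings of $X$ with degree $\le n$ and ramification bounded by $D$. It remains to see that the assignment $F' \mapsto (X' \to X)$ has finite fibers, since then the set of isomorphism classes of such $F'$ is finite, and the extensions themselves (as subfields of a fixed separable closure) are finite in number. But this is immediate: a connected \'etale covering $X' \to X$ determines its function field $k(X') = F'$ up to $F$-isomorphism, and conversely $F'$ determines $\Xbar'$ as the normalization of $\Xbar$ in $F'$ and hence $X'$; moreover a single $F'$ gives rise to only finitely many coverings because disconnected \'etale coverings of degree $\le n$ are built from connected ones of smaller degree, and one may either argue by induction on $n$ or simply restrict attention to \emph{field} extensions, which correspond to \emph{connected} coverings.

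The main obstacle, such as it is, is bookkeeping rather than mathematics: one must make sure the finitely many ''bad'' places and the bound on multiplicities extracted from the discriminant bound genuinely assemble into a single divisor $D_0$ independent of $F'$, and that Proposition~\ref{prop:disc} is applied with the correct normalizations (so that the divisor $D$ produced there indeed dominates the ramification filtration jumps for \emph{all} the coverings in play simultaneously). Once $D$ is fixed uniformly, the finiteness is a direct quotation of Theorem~\ref{thm:intro}, and the passage back from coverings of $X$ to extensions of $F$ is the usual equivalence between finite \'etale algebras over $X$ and finite separable $F$-algebras unramified outside $Z$.
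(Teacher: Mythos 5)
Your proposal is correct and follows essentially the same route the paper intends: the corollary is deduced by combining Prop.~\ref{prop:disc} (bounded degree and discriminant imply ramification bounded by a divisor $D$ depending only on those bounds) with Thm.~\ref{thm:intro}, via the standard dictionary between separable extensions of $F$ unramified outside $Z$ and connected \'etale coverings of $X=\Xbar\ssm Z$. The only point worth making explicit in your write-up is that the uniform divisor $D_0$ exists because a curve over a finite field has only finitely many closed points of bounded degree, so the supports of all the discriminant divisors in play lie in one fixed finite set.
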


\noindent
On the other hand, we have another implication 
\begin{center}
	 bounded degree \& ramification $\Rightarrow$ bounded discriminant
\end{center}
(see Rem.~\ref{rem:disc}). 
As a result, Thm.~\ref{thm:Taguchi} is equivalent to 
the main theorem (Thm.~\ref{thm:intro}) 
for the case where $X$ is a smooth curve over a finite field.

\subsection*{Contents} 
The contents of this note is the following: 

\begin{itemize}
\item Sect.~\ref{sec:ram}\,:  

\begin{itemize}
\item   
We define the notion of 
bounded ramification along an effective Cartier divisor $D$ 
for \'etale coverings of a variety $X$ over a finite field (Def.~2.1). 
We also introduce the fundamental group $\piXD$ 
which classifies such \'etale coverings of $X$ 
with bounded ramification along $D$ 
(Def.~2.2).

\item 
We define the Swan conductor of smooth $\Qlbar$-sheaves 
as in \cite{EK} (Def.~\ref{def:Swan}). 
In Lem.~\ref{lem:bdd}, we give a relation between 
the Swan conductor of smooth $\Qlbar$-sheaf and 
our notion of the bounded ramification.  
\end{itemize}

\item 
Sect.~\ref{sec:small}\,:     

\begin{itemize}
\item We interpret the Hermite-Minkowski type finiteness as 
above into a property of profinite groups called \emph{smallness} 
which is studied in \cite{HH} (Def.~3.1). 

\item The proof of the main theorem (Thm.~\ref{thm:main}) is given by 
showing the smallness of the fundamental group $\piXD$. 

\item We also provide some applications of our main theorem to 
a finiteness of representations (with finite images) 
of the fundamental group $\piXD$  (Cor.~3.6). 
\end{itemize}
\end{itemize}

\subsection*{Notation}
In this note, a \textbf{local field} is a complete discrete valuation field 
with perfect residue field. 
For such a local field $K$, we denote by 

\begin{itemize}
	\item $O_K$\,: the valuation ring of $K$, and 
	\item $v_K$\,: the valuation of $K$ normalized as $v_K(K^{\times}) =\Z$. 
\end{itemize}
For a field $F$, we denote by 

\begin{itemize}
\item $\Fbar$\,: a separable closure of $F$, and 
\item $G_F:= \Gal(\Fbar/F)$\,: the Galois group of $\Fbar$ over $F$.
\end{itemize}
We also use the following notation:

\begin{itemize} 
\item $p$\,: a fixed prime number, 
\item $l$\,: a prime number $\neq p$, and 
\item $k$\,: a finite field of characteristic $p$.
\end{itemize}
Throughout this note, we assume that $l$ is invertible in all schemes we consider. 
For a scheme $X$, 
an \textbf{\'etale covering} of $X$ we mean a finite \'etale morphism $X'\to X$. 
A \textbf{variety} over $k$ 
means a separated and connected scheme of finite type over $\Spec(k)$. 
A \textbf{curve} over $k$ is a variety over $k$ with dimension $1$. 
For an integral variety $X$ over $k$, we denote by 

\begin{itemize}
\item $k(X)$\,: the function field of $X$. 
\end{itemize}

\noindent 
Following \cite{Fu}, Sect.~3.2, 
we call a pair $(X,\xbar)$ of  a scheme and a geometric point $\xbar$ of $X$ 
a \textbf{pointed scheme}. 
A \textbf{morphism} $f:(X',\xpbar) \to (X,\xbar)$ of pointed schemes 
means a morphism $f:X'\to X$ of schemes with $f(\xpbar) = \xbar$.

\subsection*{Acknowledgments}

The author would like to thank Professor Yuichiro Taguchi 
for helpful advice and comments on Lem.~\ref{lem:bdd},   
and the proof of Thm.~\ref{thm:main}.  
Thanks also are due to Shinya Harada, and Takahiro Tsushima 
for fruitful discussions and valuable suggestions 
on this note. 
This work was supported by JSPS KAKENHI Grant Number 25800019.

\section{Ramification}
\label{sec:ram}
Adding to Notation, 
throughout this note, 
we use the following notation: 
For a variety $X$ over a finite field $k$ (\Cf Notation), 

\begin{itemize} 
\item $\Xbar$\,: a compactification of $X$ (over $\Spec(k)$) 
by Nagata's theorem (\Cf \cite{Lue}), 
that is, a proper scheme over $\Spec(k)$ which contains $X$ as a dense open subscheme,   

\item $Z:=\Xbar \ssm X$\,: the boundary of $X$,  

\item $\CuX$\,: the set of the normalizations of closed integral subschemes 
of $X$ of dimension $1$, and

\item $\DivX$\,: the monoid of 
effective Cartier divisors $D$ on $\Xbar$ whose support $|D|$ is in $Z$.    
\end{itemize} 

\noindent
For each $\phi:C\to X \in \CuX$, 
we denote by 

\begin{itemize} 
\item $\Cbar$\,: the smooth compactification of $C$ (which exists uniquely 
by a resolution of singularities),

\item $\phibar:\Cbar \to \Xbar$\,: the canonical extension of $\phi$ (by 
the valuative criterion of properness), and

\item $k(C)_x$\,: the completion of the function field $k(C)$ at $x \in \Cbar$  
which is a local field in our sense. 
\end{itemize}

\noindent
For a finite Galois extension $L/K$ of local fields, 
we use the following ramification filtrations (\Cf \cite{SerCL}, Chap.~IV, Sect.\ 3): 

\begin{itemize}
\item $(\Gal(L/K)_{\mu})_{\mu\in \R_{\ge -1}}$\,: the ramification filtration of $\Gal(L/K)$ 
  in the lower numbering which is given by 
\begin{equation}
	\label{eq:lower}
  \Gal(L/K)_{\mu} = \set{\sigma \in \Gal(L/K) | v_L(\sigma(\theta) -\theta) \ge \mu + 1},
\end{equation}
where $\theta \in L$ is a generator of the valuation ring $O_L$ as 
an $O_K$-algebra: $O_L = O_K[\theta]$ (\cite{SerCL}, Chap.~III, Sect.~6, Prop.~12). 
For each $\mu \in \R_{\ge -1}$, $\Gal(L/K)_{\mu}$ is a normal subgroup of $\Gal(L/K)$. 
In particular, it is known that  
$\Gal(L/K)_1$ is the $p$-Sylow subgroup of the inertia subgroup $\Gal(L/K)_0$, 
where $p$ is the characteristic of the residue field of $K$.
 
\item $(\Gal(L/K)^{\lambda})_{\lambda\in \R_{\ge -1}}$\,: the ramification filtration of $\Gal(L/K)$ in the upper numbering which is defined by the relation 
\begin{equation}
	\label{eq:upper}
   \Gal(L/K)^{\varphi_{L/K}(\mu)} = \Gal(L/K)_{\mu}, 
\end{equation}
where $\varphi_{L/K}:\R_{\ge -1} \to \R_{\ge -1}$ is the {Herbrand function} 
\begin{equation}
  \label{eq:Herbrand}	
 \varphi_{L/K}(\mu ) = \int^{\mu}_0 \frac{d x}{(\Gal(L/K)_0:\Gal(L/K)_x)}
\end{equation}
and its inverse function is denoted by $\psi_{L/K}$. 
\end{itemize}

\noindent
We note here, these ramification filtrations satisfy 
the following compatibility properties: 

\begin{lem}[\cite{SerCL}, Chap.~IV, Sect.~1, Prop.~2, and Sect.~3, Prop.~14]
\label{lem:fil}
Let $L/K$ be a finite Galois extension of local fields. 

\begin{enumerate}
	\item  For a sub extension $K'/K$ of $L$, we have 
	\[
		\Gal(L/K')_{\mu} = \Gal(L/K)_{\mu} \cap \Gal(L/K'), 
	\] 
	for any $\mu\in \R_{\ge -1}$.
	\item For a sub Galois extension $K'/K$ of $L$, we have  
	\[
		\Gal(K'/K)^{\lambda} = \Gal(L/K)^{\lambda}\Gal(L/K')/\Gal(L/K'),
	\]
	for any $\lambda \in \R_{\ge -1}$.
\end{enumerate} 
\end{lem}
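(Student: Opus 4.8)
\textbf{Proof strategy for Lemma~\ref{lem:fil}.}
The plan is to recall that both statements are classical facts about ramification groups, for which the reference \cite{SerCL} already does all the work; since the lemma is merely collecting them for later use, I would present it essentially as a pointer to the literature together with a brief indication of why each assertion holds. First I would recall the definition \eqref{eq:lower} of the lower-numbering filtration and observe that part~(i) is immediate from it: if $K'/K$ is a subextension of $L$, then $\Gal(L/K')$ is by definition the set of $\sigma \in \Gal(L/K)$ fixing $K'$, and the defining condition $v_L(\sigma(\theta)-\theta)\ge \mu+1$ for membership in $\Gal(L/K)_\mu$ is the \emph{same} condition whether we regard $\sigma$ as an element of $\Gal(L/K)$ or of $\Gal(L/K')$ (the valuation $v_L$ and a generator $\theta$ of $O_L$ over $O_K$, hence a fortiori over $O_{K'}$, do not change). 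This gives $\Gal(L/K')_\mu = \Gal(L/K)_\mu \cap \Gal(L/K')$ directly, and is exactly \cite{SerCL}, Chap.~IV, Sect.~1, Prop.~2.

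For part~(ii), the point is that the upper numbering is designed precisely so as to be compatible with quotients, which is the content of \cite{SerCL}, Chap.~IV, Sect.~3, Prop.~14 (often attributed to Herbrand). I would recall that the upper-numbering filtration is obtained from the lower one via the Herbrand function \eqref{eq:Herbrand}--\eqref{eq:upper}, and that Herbrand's theorem asserts the transitivity relation $\varphi_{L/K} = \varphi_{K'/K}\circ \varphi_{L/K'}$ for a sub-\emph{Galois} extension $K'/K$; combining this with part~(i) and the image formula $\Gal(K'/K)_{\varphi_{L/K'}(\mu)}$ being the image of $\Gal(L/K)_\mu$ in $\Gal(K'/K)$ yields, after the change of numbering, the stated identity $\Gal(K'/K)^\lambda = \Gal(L/K)^\lambda \Gal(L/K') / \Gal(L/K')$ for every $\lambda \in \R_{\ge -1}$.

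Since both parts are verbatim the cited propositions of Serre, there is no genuine obstacle here; the only mild subtlety worth flagging is the hypothesis in part~(ii) that $K'/K$ be \emph{Galois} (so that $\Gal(L/K')$ is normal in $\Gal(L/K)$ and the quotient makes sense), in contrast to part~(i) where $K'/K$ may be an arbitrary subextension. I would therefore simply write the proof as: ``This is \cite{SerCL}, Chap.~IV, Sect.~1, Prop.~2 for (i) and Sect.~3, Prop.~14 for (ii),'' perhaps preceded by the one-line remark above explaining why (i) is transparent from \eqref{eq:lower}.
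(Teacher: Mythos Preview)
Your proposal is correct and matches the paper's treatment exactly: the paper gives no proof at all for this lemma, simply citing \cite{SerCL}, Chap.~IV, Sect.~1, Prop.~2 and Sect.~3, Prop.~14 in the lemma header, which is precisely what you conclude you would do. Your additional explanatory remarks (the transparency of (i) from \eqref{eq:lower} and the role of Herbrand's transitivity for (ii)) are accurate and go slightly beyond the paper, but the core approach---defer to Serre---is identical.
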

\noindent
For a local field $K$, 
from Lem.\ \ref{lem:fil} (ii) one can introduce  

\begin{itemize} 
\item $(G_K^{\lambda})_{\lambda \in \R_{\ge -1}}$\,: the ramification filtration 
of $G_K = \Gal(\Kbar/K)$ (in the upper numbering) 
which is given by 
\begin{equation}
\label{eq:G_K^l}
G_K^{\lambda} = \varprojlim_{\mbox{\tiny{$L/K$:\,finite Galois $ \subset \Kbar$}}} \Gal(L/K)^{\lambda}, 
\end{equation}
and  

\item $G_K^{\lambda+}:=$ the topological closure of 
  $\bigcup_{\lambda'>\lambda}G_K^{\lambda'}$ in $G_K$ for $\lambda \in \R_{\ge 
  0}$. 
\end{itemize}
\noindent

\subsection*{\'Etale coverings} 

\begin{dfn}[\Cf \cite{HH}, Def.~3.2]
\label{def:rambdd}
\begin{enumerate}
\item
Let 
$K$ be a local field.   
For a separable field extension $L/K$ (contained in $\Kbar$) and 
$\lambda \in \R_{\ge 0}$, 
we say that the \textbf{ramification of $L/K$ is bounded by $\lambda$} 
if we have 
$G_K^{\lambda+} \subset G_L = \Gal(\Kbar/L)$. 

\item
Let $X$ be a smooth \emph{curve} over $k$, 
and $\Xbar$ the smooth compactification of $X$. 
Let $D = \sum_{z\in Z} m_z [z] \in \DivX$ ($m_z \in \Z_{\ge 0}$).    
An \'etale covering $X'\to X$ is said to be 
of \textbf{ramification bounded by $D$} 
if the extension
$k(X')_{z'}/ k(X)_z$  of local fields 
is of ramification bounded by $m_z$ 
for all $z \in Z$ (putting $m_z =0$ if $z \not \in |D|$) and for all 
$z'\in \overline{X'}$ above $z$.

\item
Let $X$ be a \emph{variety} over $k$. 
For $D \in \DivX$,  
an \'etale covering $X' \to X$ is said to be 
of \textbf{ramification bounded by $D$} 
if for every $\phi:C\to X \in \CuX$ and for each irreducible component $C'$ 
of $C\times_XX'$,  
the ramification of the induced morphism $C' \to C$ is bounded by 
$\phibarastD$, 
where $\phibarastD$ is the inverse image of $D$ by $\phibar:\Cbar \to \Xbar$ 
(for the existence of $\phibarastD$, see Lem.~\ref{lem:divisor} below). 
\end{enumerate}
\end{dfn}

\begin{lem}
\label{lem:fund}
Let $L/K$ be a finite separable extension of local fields. 
\begin{enumerate}
	\item 
The extension  $L/K$ is 
tamely ramified if and only if it is of ramification bounded by $0$.

\item 
    We denote by $\wt{L}$ the Galois closure of $L/K$. 
For any $\lambda \in \R_{\ge 0}$, 
the extension $L/K$ is of ramification bounded by $\lambda$ if and only if 
$\wt{L}/K$ is of ramification bounded by $\lambda$. 

\item 
Assume that the extension $L/K$ is Galois. 
For any $\lambda\in \R_{\ge 0}$, 
the ramification of $L/K$ is bounded by $\lambda$ 
if and only if $\Gal(L/K)^{\lambda'} = \set{1}$ for any $\lambda'>\lambda$.  
\end{enumerate}
\end{lem}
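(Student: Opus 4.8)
The plan is to prove the three assertions in the order (iii), (ii), (i), obtaining (i) by combining the first two with an explicit description of the wild inertia. For (iii), where $L/K$ is Galois so that $G_L$ is a closed normal subgroup of $G_K$ with $\Gal(L/K)=G_K/G_L$, the one fact needed is that the canonical surjection $G_K\twoheadrightarrow\Gal(L/K)$ carries $G_K^{\lambda}$ onto $\Gal(L/K)^{\lambda}$ for every $\lambda$; this follows from the definition \eqref{eq:G_K^l} and Lemma \ref{lem:fil}(ii) by passing to the limit over the finite Galois subextensions of $\Kbar/K$ containing $L$. Granting it, $G_K^{\lambda'}\subset G_L$ is equivalent to $\Gal(L/K)^{\lambda'}=\{1\}$, and since $G_L$ is closed one gets $G_K^{\lambda+}\subset G_L$ if and only if $G_K^{\lambda'}\subset G_L$ for all $\lambda'>\lambda$, i.e.\ if and only if $\Gal(L/K)^{\lambda'}=\{1\}$ for all $\lambda'>\lambda$, which is the claim.

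For (ii), I would first note that each $G_K^{\lambda}$ is a closed normal subgroup of $G_K$, being the inverse limit of the normal subgroups $\Gal(M/K)^{\lambda}\subset\Gal(M/K)$, and hence so is $G_K^{\lambda+}$. The implication ``$\wt{L}/K$ of ramification bounded by $\lambda$ $\Rightarrow$ $L/K$ of ramification bounded by $\lambda$'' is immediate from $G_{\wt{L}}\subset G_L$. For the converse, recall that $\wt{L}$ is the Galois closure of $L/K$, so $G_{\wt{L}}=\bigcap_{\sigma\in G_K}\sigma G_L\sigma^{-1}$ is the normal core of $G_L$ in $G_K$; if $G_K^{\lambda+}\subset G_L$, then normality of $G_K^{\lambda+}$ gives $G_K^{\lambda+}=\sigma G_K^{\lambda+}\sigma^{-1}\subset\sigma G_L\sigma^{-1}$ for all $\sigma\in G_K$, whence $G_K^{\lambda+}\subset G_{\wt{L}}$.

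For (i), I would reduce to the Galois case by (ii), using the standard fact that $L/K$ is tamely ramified if and only if its Galois closure $\wt{L}/K$ is (the latter is a compositum of conjugate copies of $L$, each tame over $K$, and any subextension of a tame extension is tame). For $L/K$ Galois, by (iii) with $\lambda=0$ it suffices to show that $\Gal(L/K)^{\lambda'}=\{1\}$ for all $\lambda'>0$ is equivalent to tameness. Using \eqref{eq:Herbrand} and the convention that $\Gal(L/K)_x=\Gal(L/K)_1$ for $0<x\le 1$, one computes $\varphi_{L/K}(u)=u/e'$ on $[0,1]$, where $e'=(\Gal(L/K)_0:\Gal(L/K)_1)$ is prime to $p$; consequently $\Gal(L/K)^{\lambda'}=\Gal(L/K)_1$ for all $0<\lambda'\le 1/e'$, so that $\bigcup_{\lambda'>0}\Gal(L/K)^{\lambda'}=\Gal(L/K)_1$, the $p$-Sylow subgroup of the inertia group $\Gal(L/K)_0$. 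Hence $\Gal(L/K)^{\lambda'}=\{1\}$ for all $\lambda'>0$ if and only if $\Gal(L/K)_1=\{1\}$, which is exactly tameness of $L/K$.

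I expect the delicate step to be this last one: correctly matching the ``jump at $0$'' of the upper-numbering filtration of a finite Galois group with its wild inertia $\Gal(L/K)_1$, which requires care with the conventions for real indices in the lower numbering and with the behaviour of $\varphi_{L/K}$ near $0$. Everything else rests on Lemma \ref{lem:fil} (Herbrand's theorem) and elementary group theory (normal cores, tame extensions), so no essentially new ingredient is involved.
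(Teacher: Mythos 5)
Your proposal is correct and follows essentially the same route as the paper: (iii) via the surjection $G_K^{\lambda}\twoheadrightarrow\Gal(L/K)^{\lambda}$, (ii) via normality of $G_K^{\lambda+}$ and the fact that $G_{\wt{L}}$ is the normal core of $G_L$, and (i) by reducing to the Galois case and identifying $\bigcup_{\lambda'>0}\Gal(L/K)^{\lambda'}$ with the wild inertia $\Gal(L/K)_1$. You are in fact a bit more explicit than the paper on two points it leaves implicit --- that tameness passes to the Galois closure (needed for the reduction in (i)) and the behaviour of $\varphi_{L/K}$ near $0$ --- but the substance is identical.
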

\begin{proof}
(ii) 
The ``if'' part follows immediately from the definition. 
We show the ``only if'' part.
Assume that $L/K$ is of ramification bounded by $\lambda$ for some 
$\lambda \in \R_{\ge 0}$. 
Namely, we have $G_K^{\lambda+} \subset G_L$.  
Since $G_{\wt{L}} = \Gal(\Kbar/\wt{L})$ is the maximal normal subgroup of $G_K$ which is contained in $G_{L}$, we have $G_{K}^{\lambda+} \subset G_{\wt{L}}$.

\sn 
(iii) 
The restriction $G_K \surj \Gal(L/K); \sigma \mapsto \sigma|_L$  
induces 
$G_K^{\lambda} \surj \Gal(L/K)^{\lambda}$ from the very definition of $G_K^{\lambda}$ in \eqref{eq:G_K^l} for any $\lambda$. 
For any $\lambda' > \lambda$, 
$G_K^{\lambda'} \subset G_L$ if and only if $\Gal(L/K)^{\lambda'} = \set{1}$, 
and the assertion follows from it. 

\sn 
(i) 
By taking the Galois closure of the extension $L/K$ and using (ii), 
we may assume that $L/K$ is a Galois extension. 
Let $p$ be the characteristic of 
the residue field of $K$. 
As $\bigcap_{\lambda>0}\Gal(L/K)^{\lambda} = \Gal(L/K)_1$ 
is the $p$-Sylow subgroup of the inertia subgroup $\Gal(L/K)^0$, 
the extension 
$L/K$ is tamely ramified if and only if $\Gal(L/K)^{\lambda} = \set{1}$ for any $\lambda>0$. 
The assertion (i) follows from (iii).
\end{proof}

For a pointed connected Noetherian scheme $(X,\xbar)$ (\Cf Notation), 
we define

\begin{itemize}
\item $\Cov(X)$\,: the Galois category of \'etale coverings of $X$, 

\item $\piXx$\,: the fundamental group of $(X,\xbar)$ associated to $\Cov(X)$, 
which is defined by 
\[
   \piXx = \varprojlim_{(X', \ol{x'}) \in \Cov(X)} \Aut_X(X')^{\op},
\]
where the projective limit is taken over a projective system of pointed Galois 
coverings  $(X', \xpbar) \to (X,\xbar)$ in $\Cov(X)$
(\cite{SGA1}, Exp.~V, Sect.~7), and  

\item 
$\piXxSets$\,: the category of 
finite sets on which $\piXx$ acts continuously on the left.   
\end{itemize}

\noindent 
The fiber functor $Y \mapsto \Hom_X(\xbar, Y)$ gives an 
equivalence of categories 
\begin{equation}
\label{eq:Gal}
\xymatrix@C=5mm{
  \Cov(X) \ar[r]^-{\simeq} & \piXxSets
  }
\end{equation}
(\cite{SGA1}, Exp.~V, Prop.~5.8; \cite{Fu}, Thm.~3.2.12). 

Now, we assume that $X$ is a variety over $k$ (\Cf Notation). 
For $D\in \DivX$, 
the full subcategory $\Cov(X\subset \Xbar,D) \subset \Cov(X)$ 
of \'etale coverings of $X$ with 
ramification bounded by $D$ 
also forms a Galois category  (see \cite{HH}, Lem.~3.3). 

\begin{dfn}
\label{def:piXD}
Associated to the Galois category $\Cov(X\subset \Xbar,D)$, 
we define the fundamental group by 
\[
\pi_1(X\subset \Xbar, D, \xbar) = \varprojlim_{(X', \xpbar) \in \Cov(X\subset \Xbar, D)} \Aut_X(X')^{\op},
\] 
where the projective limit is taken over a projective system of pointed Galois 
coverings $(X', \xpbar) \to (X,\xbar)$ in $\Cov(X\subset \Xbar, D)$. 
\end{dfn}

In the following, 
we write $\piXDx$ and $\Cov(X,D)$ 
when we need not specify the fixed compactification $\Xbar$ of $X$. 
For the geometric point $\xbar:\Spec(\Omega) \to X$ with 
a separably closed field $\Omega$, 
we denote also by $\xbar$ 
the geometric point $\Spec(\Omega) \onto{\xbar} X \inj \Xbar$ of $\Xbar$. 
The functors 
$\Cov(\Xbar) \to \Cov(X,D);\ \overline{Y} \mapsto \overline{Y} \times_{\Xbar} X$, and 
$\Cov(X,D) \to \Cov(X);\ Y \mapsto Y$ 
induce homomorphisms 
\begin{equation}
\label{eq:pi}
  \xymatrix@C=5mm{
  \piXx \ar@{->>}[r] & \piXDx \ar@{->>}[r] & \piXbarx
  }
\end{equation}
which are surjective 
(\cite{SGA1}, Exp.~V, Prop.~6.9).

\begin{lem}
\label{lem:pull-back}
Let $f:(X',\xpbar) \to (X,\xbar)$ be a morphism of pointed varieties over $k$,  
and $D\in \DivX$. 
Assume the conditions $\mathrm{(a)}$ and $\mathrm{(b)}$ below. 

\begin{enumerate}[label=$\mathrm{(\alph*)}$]
\item 
There exists a commutative diagram:  
\[
	\xymatrix@C=15mm{
	\Xpbar\ar[r]^{\fbar} & \Xbar \\
	X' \ar@{^{(}->}[u]\ar[r]^f & X\ar@{^{(}->}[u]
	}
\]
with a morphism $\fbar:\Xpbar \to \Xbar$ 
from a compactification $\Xpbar$ of $X'$ to $\Xbar$, 
where the vertical morphisms are the inclusions.

\item 
The inverse image 
$\fbarastD \in \DivXp$ 
of $D$ exists, where $Z' = \Xpbar \ssm X'$.
\end{enumerate}

\noindent
Then 
we have a canonical homomorphism 
\[
  \xymatrix@C=5mm{
	\pi_1(X'\subset \Xpbar, \fbarastD, \xpbar) \ar[r]& \pi_1(X\subset \Xbar, D,\xbar).
	}
\]
\end{lem}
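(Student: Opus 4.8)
\emph{Sketch of proof.} The homomorphism will be the one induced, via the functoriality of the fundamental group (\cite{SGA1}, Exp.~V), by a functor between the relevant Galois categories. Recall first that $f\colon X'\to X$ induces an exact pull-back functor $\Cov(X)\to\Cov(X')$, $Y\mapsto Y\times_XX'$, compatible with the fibre functors at $\xbar$ and $\xpbar$ (since $f(\xpbar)=\xbar$), and that this functor realises the canonical surjection $\pi_1(X',\xpbar)\to\piXx$. It therefore suffices to show that it restricts to a functor
\[
  \Cov(X\subset\Xbar,D)\longrightarrow\Cov(X'\subset\Xpbar,\fbarastD),
\]
i.e.\ that if an \'etale covering $Y\to X$ has ramification bounded by $D$, then $Y':=Y\times_XX'\to X'$ has ramification bounded by $\fbarastD$; the asserted homomorphism $\pi_1(X'\subset\Xpbar,\fbarastD,\xpbar)\to\pi_1(X\subset\Xbar,D,\xbar)$ is then the induced map on $\pi_1$, its exactness, canonicity and base-point independence being inherited from the situation over $X$.

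To verify the bound I would argue directly from Definition~\ref{def:rambdd}(iii). Fix $\phi'\colon C'\to X'\in\CuXp$ with canonical extension $\phipbar\colon\Cpbar\to\Xpbar$, and an irreducible component $C''$ of $C'\times_{X'}Y'$; one must show that $C''\to C'$ has ramification bounded by $\phipbarast\fbarastD$. Let $W\subseteq X$ be the closure of the image of $f\circ\phi'$. If $\dim W=0$, then $W$ is a closed point with $\kappa(W)/k$ finite, $f\circ\phi'$ factors through $\Spec\kappa(W)$, and $C'\times_{X'}Y'=C'\times_XY$ is componentwise a base change of $C'$ along a constant finite separable field extension, hence everywhere unramified, so the bound holds (all multiplicities of $\phipbarast\fbarastD$ being $\ge 0$). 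If $\dim W=1$, let $\phi\colon C\to X\in\CuX$ be the normalisation of $W$; as $C'$ is normal, $f\circ\phi'$ factors as $\phi\circ\alpha$ with $\alpha\colon C'\to C$ whose extension $\bar\alpha\colon\Cpbar\to\Cbar$ is a finite morphism, and on compactifications $\fbar\circ\phipbar=\phibar\circ\bar\alpha$ (both extend $f\circ\phi'$, and $\Xbar$ is separated), whence $\phipbarast\fbarastD=\bar\alpha^{\ast}(\phibarastD)$. Moreover $C'\times_{X'}Y'=C'\times_C(C\times_XY)$, so $C''$ lies over an irreducible component $\wt C$ of $C\times_XY$, and by hypothesis $\wt C\to C$ has ramification bounded by $\phibarastD$. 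Completing at the boundary, for a point $z''$ of $\overline{C''}$ over $z'\in\Cpbar\ssm C'$, lying over $z=\bar\alpha(z')\in\Cbar$ and over a place $w$ of $\wt C$ above $z$, one has $k(C'')_{z''}=k(C')_{z'}\cdot k(\wt C)_{w}$ inside a common algebraic closure; and writing $e=e(z'/z)$ for the ramification index of $\bar\alpha$ at $z'$ and $m_z$ for the multiplicity of $\phibarastD$ at $z$, the multiplicity of $\phipbarast\fbarastD=\bar\alpha^{\ast}(\phibarastD)$ at $z'$ equals $e\,m_z$.

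Thus everything reduces to the purely local statement: \emph{for local fields $K$, $M$, $L$ with $M/K$ finite separable of ramification bounded by $m$ and $L/K$ finite of ramification index $e$, the extension $LM/L$ has ramification bounded by $em$} (applied with $K=k(C)_z$, $L=k(C')_{z'}$, $M=k(\wt C)_w$, $m=m_z$). Unwinding Definition~\ref{def:rambdd}(i), this says $G_L^{(em)+}\subseteq G_{LM}=G_L\cap G_M$, and since $G_K^{m+}\subseteq G_M$ it suffices to prove $G_L^{(em)+}\subseteq G_K^{m+}$. Passing to a finite Galois extension $N/K$ containing $L$ and combining Lemma~\ref{lem:fil} with the transitivity of the Herbrand function, one obtains $G_L^{\lambda}=G_K^{\varphi_{L/K}(\lambda)}\cap G_L$ for every $\lambda$; since the slopes of $\varphi_{L/K}$ all lie in $[1/e,1]$, one has $\varphi_{L/K}(\lambda)\ge\lambda/e$, so $\lambda>em$ forces $\varphi_{L/K}(\lambda)>m$, and then $G_L^{\lambda}\subseteq G_K^{m+}\cap G_L\subseteq G_L\cap G_M$; taking the closure of the union over $\lambda>em$ gives the claim (for $L/K$ separable; the inseparable case is similar, after replacing $L$ by the separable closure of $K$ in $L$).

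The main obstacle is precisely this passage from $X$ through curves to the local statement: one must check that the pull-back functor respects the bounded-ramification subcategories, which comes down to the local inequality $\psi_{L/K}(m)\le e(L/K)\,m$ for the extensions of local fields along the boundary, together with the bookkeeping identity $\phipbarast\fbarastD=\bar\alpha^{\ast}(\phibarastD)$ (the degenerate case $\dim W=0$ being harmless). Once the restricted functor is in place, the remaining assertions are formal Galois-category theory.
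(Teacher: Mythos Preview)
Your outline matches the paper's proof almost step for step: both reduce to showing that the pull-back functor $\Cov(X)\to\Cov(X')$ restricts to the bounded-ramification subcategories, both split on whether $f\circ\phi'(C')$ is a point or a curve, and both finish with the local statement (the paper's Lemma~\ref{lem:em}) that $LK'/K'$ has ramification bounded by $e\lambda$ when $L/K$ is bounded by $\lambda$ and $e=e(K'/K)$. Your local argument via $G_L^{\lambda}=G_K^{\varphi_{L/K}(\lambda)}\cap G_L$ and $\varphi_{L/K}(\lambda)\ge\lambda/e$ is exactly the paper's Case~2 (separable $K'/K$), just phrased at the level of absolute Galois groups rather than a chosen finite Galois $M/K$.

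Two points to tighten. First, the induced map $\pi_1(X',\xpbar)\to\piXx$ is \emph{not} a surjection in general (think of $X'\to X$ a nontrivial \'etale covering); fortunately you never use surjectivity, so just drop the word. Second, the purely inseparable case is not ``similar'' to the separable one: the Herbrand function $\varphi_{L/K}$ is not available, and one instead argues directly that the restriction isomorphism $\Gal(LK'/K')\isomto\Gal(L/K)$ preserves the \emph{lower}-numbering filtration (using $(K')^e=K$, $(LK')^e=L$, and a generator $\theta$ of $O_{LK'}$ over $O_{K'}$), whence $\psi_{LK'/K'}=\psi_{L/K}$ and the bound by $\lambda$ (not merely $e\lambda$) transfers. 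Your strategy of passing through the separable closure of $K$ in $L$ is correct, but that last step deserves its own line rather than a parenthetical.
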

\begin{proof}
We prove that the functor 
$\Cov(X) \to \Cov(X');\ Y \mapsto Y\times_X X'$ 
induces a functor $\Cov(X\subset \Xbar ,D) \to \Cov(X'\subset \Xpbar, \fbarastD)$.
The latter functor gives the required homomorphism 
on the fundamental groups (\cite{SGA1}, Exp.~V, Sect.~6). 
For $h:Y\to X \in \Cov(X\subset \Xbar ,D)$,  
it is enough to show that 
$Y' := Y\times_X X' \to X'$ 
is of ramification bounded by $\fbarastD$. 
Take any $\phi':C' \to X' \in \CuXp$, and we have to show 
that $h':Y'\times_{X'} C' \to C'$ 
gives \'etale coverings of ramification bounded by $\phipbarast(\fbarastD)$. 
Here, we divide the proof into the two cases according to the image 
$f \circ \phi'(C')$ in $X$. 

\sn
(\textbf{The case where $f\circ \phi'(C')$ is a point})
If the image of 
the composite $f \circ \phi'$ is a closed point of $X$, 
then $h':Y'\times_{X'} C' \to C'$ induces 
separable constant field extensions of $k(C')$, 
and thus unramified on the boundary of $C'$. 
In particular, $h'$ is of ramification bounded by $\phipbarast(\fbarastD)$. 

\sn
(\textbf{The case where $f\circ \phi'(C')$ is a curve})
If $f\circ \phi'$ factors through 
$\phi:C\to X\in \CuX$ as in 
\[
	\xymatrix@C=15mm{
	C'\ar[r]^{\phi'}\ar[d]_{g} & X'\ar[d]^f \\
	C \ar[r]^{\phi} & X,
	}
\]
then 
the extension $\gbar:\Cpbar \to \Cbar$ of $g$ fits into 
the following commutative diagram: 
\[
	\xymatrix@C=15mm{
	\Cpbar \ar[d]_{\gbar} \ar[r]^{\phipbar} & \Xpbar \ar[d]^{\fbar}\\
	\Cbar \ar[r]^{\phibar} & \Xbar .
}
\]   
Since $h:Y\to X$ is of ramification bounded by $D$, 
the base change $Y \times_X C \to C$ produces 
\'etale coverings of $C$ which are of ramification bounded by $\phibarastD$. 
It is left to show that 
$h':Y'\times_{X'}C' = Y\times_X C' \to C'$ 
is of ramification bounded by $\phipbarast(\fbarastD) =\gbarast(\phibarastD)$.
As a result, we may assume that  
$f:X'\to X$ is a 
dominant morphism of smooth curves over $k$. 

Let $D = \sum_z m_z [z] \in \DivX$, and    
$Y \in \Cov(X\subset \Xbar ,D)$. 
Take an irreducible component 
$Y'$ of $Y\times_X X'$.  
We prove that the induced covering 
$Y'\to X'$ is of ramification bounded by $\fbarastD$. 
At $z' \in Z' = \Xpbar \ssm X'$ with $\ol{f}(z') = z$, 
the multiplicity of $\fbarastD$ is 
$e_z m_z$, 
where $e_z$ is the ramification index of the extension $k(X')_{z'}/k(X)_z$. 
The assertion carries over to the local situation as in 
the following lemma.  
\end{proof}

\begin{lem} 
\label{lem:em}
Let $K$ be a local field, and  
$K'/K$ a finite extension 
with ramification index $e$.   
If a finite separable extension $L/K$ is of ramification bounded by $\lambda$ for some $\lambda\in \R_{\ge 0}$, 
then $LK'/K'$ is of ramification bounded by $e\lambda$. 
\end{lem}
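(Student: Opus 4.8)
The plan is to reduce the whole statement to the purely group-theoretic inclusion
\[
  G_{K'}^{\,e\lambda+}\ \subseteq\ G_K^{\,\lambda+}
\]
of ramification subgroups inside $G_K = \Gal(\Kbar/K)$, where $\Kbar$ is chosen to be a separable closure of $K'$ as well, so that $G_{K'}$ becomes a closed subgroup of $G_K$. Granting this inclusion the lemma follows at once: for the compositum one has $G_{LK'} = G_L \cap G_{K'}$, while the hypothesis that $L/K$ is of ramification bounded by $\lambda$ means precisely $G_K^{\lambda+} \subseteq G_L$, whence
\[
  G_{K'}^{\,e\lambda+}\ \subseteq\ G_{K'}\cap G_K^{\,\lambda+}\ \subseteq\ G_{K'}\cap G_L\ =\ G_{LK'},
\]
which is exactly the assertion that $LK'/K'$ is of ramification bounded by $e\lambda$.

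To establish the displayed inclusion I would combine two ingredients. The first is the behaviour of the upper numbering under the finite base change $K'/K$: for every $v \ge 0$ one has $G_{K'}^{\,v} \subseteq G_K^{\,\varphi_{K'/K}(v)}$, where $\varphi_{K'/K}$ is the Herbrand function of $K'/K$. This is checked at finite level: fix a finite Galois extension $M/K$ inside $\Kbar$ with $K' \subseteq M$, put $G = \Gal(M/K)$ and $H = \Gal(M/K')$; by Lemma~\ref{lem:fil}(i) one has $H_\mu = H \cap G_\mu$ for the lower numberings, and unwinding the definition of the upper numberings via \eqref{eq:upper} together with the transitivity $\psi_{M/K} = \psi_{M/K'}\circ\psi_{K'/K}$ of Herbrand functions gives $H^{\,v} \subseteq G^{\,u}$ whenever $u \le \varphi_{K'/K}(v)$; passing to the limit over such $M$ yields the claim. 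The second ingredient is the slope estimate $\varphi_{K'/K}(v) \ge v/e$ for $v \ge 0$, which is where the ramification index $e = e(K'/K)$ enters: $\varphi_{K'/K}$ is continuous, concave, piecewise linear, vanishes at $0$, and all its slopes lie in $[1/e, 1]$ — for $K'/K$ Galois this is immediate from \eqref{eq:Herbrand}, the integrand being $\ge 1/|\Gal(K'/K)_0| = 1/e$. Combining the two: if $v > e\lambda$ then $\varphi_{K'/K}(v) > \lambda$, so $G_{K'}^{\,v} \subseteq G_K^{\,\varphi_{K'/K}(v)} \subseteq \bigcup_{u > \lambda} G_K^{\,u}$; taking the union over all $v > e\lambda$ and then the topological closure gives $G_{K'}^{\,e\lambda+} \subseteq G_K^{\,\lambda+}$.

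The main obstacle is that $K'/K$ need not be Galois (and, in the intended application, need not even be separable). Both the transitivity of Herbrand functions and the slope estimate should therefore be read through the Galois closure $\wt{K'}$ of $K'/K$: since $\wt{K'}/K$, $\wt{K'}/K'$ and $M/\wt{K'}$ are all Galois, one may use $\psi_{M/K} = \psi_{M/\wt{K'}}\circ\psi_{\wt{K'}/K}$ and $\psi_{M/K'} = \psi_{M/\wt{K'}}\circ\psi_{\wt{K'}/K'}$ together with Lemma~\ref{lem:fil}, and the bound $\varphi_{K'/K}(v) \ge v/e$ follows from a short computation of the derivative of the composite $\varphi_{\wt{K'}/K}\circ\psi_{\wt{K'}/K'}$ in terms of lower-numbering indices, the relevant constant being $(\Gal(\wt{K'}/K)_0 : \Gal(\wt{K'}/K')_0) = e(\wt{K'}/K)/e(\wt{K'}/K') = e(K'/K) = e$. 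The inseparable case reduces to the separable one, since a purely inseparable extension does not change the absolute Galois group and only rescales the ramification filtration by its ramification index, which is absorbed into $e$. Everything else — the identity $G_{LK'} = G_L \cap G_{K'}$, the concavity of $\varphi$, and the passage to the limit over $M$ — is routine.
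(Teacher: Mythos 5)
Your argument is correct and is essentially the paper's own proof in different packaging: the separable case rests on the same two facts --- the compatibility $\Gal(M/K')_{\mu}=\Gal(M/K)_{\mu}\cap\Gal(M/K')$ of Lem.~\ref{lem:fil} and the slope bound $\varphi_{K'/K}(\mu)\ge\mu/e$ coming from $(\Gal(M/K)_0:\Gal(M/K')_0)=e$ --- which the paper chases through a fixed finite Galois $M\supseteq LK'$ and you repackage as the single inclusion $G_{K'}^{\,e\lambda+}\subseteq G_K^{\,\lambda+}$ of absolute ramification subgroups. One small correction to your purely inseparable step: the filtration there is not rescaled by the inseparable ramification index but preserved exactly (the paper checks $v_{L'}(\sigma(\theta)-\theta)=v_{L}(\sigma(\theta^e)-\theta^e)$ using $O_{L'}=O_{K'}[\theta]$ and $(L')^e=L$), though either version yields the stated bound $e\lambda$.
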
 
\begin{proof}
\textbf{(Reduction to $L/K$ is Galois)}
We denote by $\wt{L}$ the Galois closure of $L/K$. 
From Lem.\ \ref{lem:fund} (ii), 
$\wt{L}/K$ is of ramification bounded by $\lambda$. 
On the other hand, if $\wt{L}K'/K'$ is of ramification bounded by $e\lambda$, so is $LK'/K'$ 
by Lem.\ \ref{lem:fund} (ii) again. 
We may assume that $L/K$ is a Galois extension. 

%

\sn
\textbf{(Proof of the lemma)}
We put $L' = LK'$ 
and consider the two cases below: 

\sn
\textbf{Case 1 ($K'/K$ is purely inseparable):} 
Assume that $K'/K$ is purely inseparable. 
In this case, we show that $L'/K'$ is of ramification bounded by $\lambda$. 
Since we have $K'\cap L = K$, 
the restriction gives an isomorphism 
$\Gal(L'/K') \isomto \Gal(L/K)$ and hence 
$[K':K] = [L':L] = e$. 
As the local field $K$ is of characteristic $p>0$ 
with perfect residue field, 
we have $(K')^e = K$ and $(L')^e = L$.
Take $\theta \in O_{L'}$ such that $O_{L'} = O_{K'}[\theta]$ 
(\cite{SerCL}, Chap.~III, Sect.~6, Prop.~12) and this gives 
$O_{L} = O_K[\theta^e]$. 
For any $\sigma \in \Gal(L'/K')$, we have 
\begin{align*}
	e\cdot  v_{L'}(\sigma(\theta) - \theta) 
	&= v_{L'}(\sigma(\theta^e) - \theta^e) \quad  (\mbox{since $e$ is a power of $p$})\\
	&= e\cdot v_{L}(\sigma(\theta^e) - \theta^e). 
\end{align*}
Thus, the restriction $\Gal(L'/K) \isomto \Gal(L/K)$ induces an isomorphism 
$\Gal(L'/K')_{\mu} \simeq \Gal(L/K)_{\mu}$ for any $\mu\in \R_{\ge -1}$ 
and thus $\psi_{L'/K'} = \psi_{L/K}$ by \eqref{eq:Herbrand}.  
From the assumption, for any $\lambda'>\lambda$, 
\[
\Gal(L'/K')^{\lambda'} 
= \Gal(L'/K')_{\psi_{L'/K'}(\lambda')} 
\simeq \Gal(L/K)_{\psi_{L/K}(\lambda')} = \Gal(L/K)^{\lambda'} = \set{1}. 
\]
This implies that the extension $L'/K'$ is of ramification bounded by $\lambda$ (Lem.\ \ref{lem:fund} (iii)).

\sn 
\textbf{Case 2 ($K'/K$ is separable):} 
  Assume that $K'/K$ is a separable extension. 
  We show that $L'/K'$ is of ramification bounded by $e\lambda$. 
  Take a finite Galois extension $M/K$ with $L' \subset M$. 
  The Herbrand function $\psi_{K'/K}$ of the separable extension $K'/K$ is 
  defined to be 
  $\psi_{K'/K}= \varphi_{M/K'} \circ \psi_{M/K}$ 
  (\cite{SerCL}, Chap.~IV, Sect.~3, Rem.~2) and it satisfies 
  $\psi_{K'/K}(\mu/e) \le \mu$ for any $\mu\in \R_{\ge -1}$. 
 In fact,  
\[
  \varphi_{K'/K}(\mu) = \varphi_{M/K} \circ \psi_{M/K'}(\mu)
  \stackrel{(\dagger)}{\ge} \frac{1}{e}\varphi_{M/K'}\circ \psi_{M/K'}(\mu)
     = \frac{\mu}{e},  
\]
  where the inequality  ($\dagger$)
  follows from $\#\Gal(M/K)_0 = e \#\Gal(M/K')_0$ and Lem.\ \ref{lem:fil} (i). 
  For any $\lambda' >e\lambda$, we have 
  \begin{align*}
  	\Gal(M/K')^{\lambda'} 
  	&\subset \Gal(M/K')^{\psi_{K'/K}(\lambda'/e)}\quad (\mbox{by $\psi_{K'/K}(\lambda'/e) \le \lambda'$})\\
  	&= \Gal(M/K')_{\psi_{M/K}(\lambda'/e)} \quad (\mbox{by Lem.\ \ref{lem:fil} (ii)})\\
  	&= \Gal(M/K)_{\psi_{M/K}(\lambda'/e)}\cap \Gal(M/K') \quad (\mbox{by Lem.\ \ref{lem:fil} (i)}) \\
  	& \subset \Gal(M/K)^{\lambda'/e}\quad (\mbox{by Lem.\ \ref{lem:fil} (ii)})\\
  	& \subset \Gal(M/L)\quad (\mbox{by $\lambda'/e >\lambda$ and $L/K$ is of ramification bounded by $\lambda$}).
  \end{align*}
  As a result, we have 
  $\Gal(M/K')^{\lambda'} \subset \Gal(M/L)\cap \Gal(M/K') = \Gal(M/L')$ 
  and hence 
  \[
  \Gal(L'/K')^{\lambda'} 
    = \Gal(M/K')^{\lambda'}\Gal(M/L')/\Gal(M/L') = \set{1}
   \] 
   by  Lem.\ \ref{lem:fil} (ii).
   This implies the assertion by Lem.\ \ref{lem:fund} (iii).

\sn
\textbf{(Proof of the lemma -- continued)} 
We take the separable closure $K^s$ of $K$ within $K'$. 
From Case 2 above,  
the extension 
$LK^s/K^s$ is of ramification bounded by $e\lambda$. 
Applying Case 1 to the purely inseparable extension $K'/K^s$, 
the extension $L' = LK'/K'$ is of ramification bounded by $e\lambda$ as required.  
\end{proof}

Concerning the condition (b) in Lem.~\ref{lem:pull-back}, 
the lemma below assures  
the existence of the inverse image of an effective Cartier divisor 
in some situations.

\begin{lem}[\Cf \cite{EGA}, Chap.~IV, Sect.~21.4]
\label{lem:divisor}
Let $f:X'\to X$ be a morphism of varieties over $k$, 
and $D$ an effective Cartier divisor on $X$. 
Then the inverse image $f^{\ast}D$ is defined in each of the 
following conditions:

\begin{enumerate}[label=$\mathrm{(\alph*)}$]
\item
$f$ is flat, 
\item
$X',X$ are integral, and  
$f$ is a dominant morphism,

\item
$X'$ is reduced, and for the generic point 
$\xi' \in X'$ of any irreducible component of $X'$, we 
have $f(\xi') \not \in |D|$,  and 

\item
$f$ is a blowing up of $X$ 
along a closed subscheme of $X$.  
\end{enumerate}
\end{lem}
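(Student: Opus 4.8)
The plan is to reduce everything to one commutative‑algebra fact. The statement is local on $X$: an effective Cartier divisor $D$ is locally the zero locus of a single function $t$ whose germ at every point is a non‑zero‑divisor, and the inverse image $f^{\ast}D$ is \emph{by definition} the locally principal subscheme cut out by the pulled‑back functions $f^{\#}(t)$ — it is an effective Cartier divisor precisely when each $f^{\#}(t)$ is again a non‑zero‑divisor, for then the invertible ideals $(f^{\#}t)$ glue to the invertible ideal sheaf $\mathcal{I}_D\cdot\mathcal{O}_{X'}$. Since $X'$ is Noetherian, $f^{\#}(t)$ is a non‑zero‑divisor in every stalk $\mathcal{O}_{X',x'}$ if and only if it lies in no associated prime of $X'$, i.e.\ if and only if no associated point of $X'$ maps into $|D|$; and $|D|$ itself contains no associated point of $X$, because a local equation of $D$ vanishing at such a point would annihilate a nonzero element. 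So the lemma reduces to checking, in each of $\mathrm{(a)}$--$\mathrm{(d)}$, that $f$ carries every associated point of $X'$ into $X\ssm|D|$ (cf.\ \cite{EGA}, Chap.~IV, 21.4) — equivalently, that the pullback of a local non‑zero‑divisor stays a non‑zero‑divisor.

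Cases $\mathrm{(a)}$, $\mathrm{(b)}$, $\mathrm{(c)}$ would then be dispatched quickly. In $\mathrm{(a)}$, flatness keeps $0\to\mathcal{O}_X\xrightarrow{\,t\,}\mathcal{O}_X$ exact after applying $\otimes_{\mathcal{O}_X}\mathcal{O}_{X'}$, so $f^{\#}(t)$ remains a non‑zero‑divisor (equivalently, the associated points of $X'$ lie over those of $X$, which avoid $|D|$). In $\mathrm{(b)}$, the integral scheme $X'$ has only its generic point as associated point, dominance sends it to the generic point of $X$, and that lies outside the proper closed set $|D|$; concretely $\mathcal{O}_{X,f(x')}\inj k(X)\inj k(X')$ shows $f^{\#}$ is injective on stalks, so $f^{\#}(t)\neq0$ in the domain $\mathcal{O}_{X',x'}$. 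In $\mathrm{(c)}$, since $X'$ is reduced its associated points are exactly the generic points of its irreducible components, which are mapped outside $|D|$ by hypothesis.

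Case $\mathrm{(d)}$ is the one that needs real work, and it is where I expect the main obstacle. I would argue on the standard affine charts of the blowing up. Locally write $X=\Spec A$ and $Y=V(I)$ with $I=(a_0,\dots,a_r)$, and let $\mathcal{R}=A[I\tau]=\bigoplus_{n\ge0}I^n\tau^n\subseteq A[\tau]$ be the Rees algebra. A non‑zero‑divisor $t\in A$ stays a non‑zero‑divisor in $A[\tau]$, hence in the subring $\mathcal{R}$. Now $X'=\operatorname{Bl}_Y X$ is covered by the affine opens $\Spec B_i$ with $B_i=A[I/a_i]=\bigl(\mathcal{R}[(a_i\tau)^{-1}]\bigr)_0$; since $a_i\tau$ is a unit of degree $1$, the $\Z$‑graded ring $\mathcal{R}[(a_i\tau)^{-1}]$ is a Laurent polynomial ring $B_i[u,u^{-1}]$ over $B_i$. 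Localization being flat, $t$ remains a non‑zero‑divisor in $\mathcal{R}[(a_i\tau)^{-1}]\cong B_i[u,u^{-1}]$, hence in the subring $B_i$, so $f^{\#}(t)$ is a non‑zero‑divisor on every chart and $f^{\ast}D$ is defined.

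The inputs for $\mathrm{(a)}$--$\mathrm{(c)}$ (effective Cartier $\Leftrightarrow$ locally a non‑zero‑divisor equation; associated points under flat base change; $\mathrm{Ass}=\mathrm{Min}$ for reduced Noetherian rings) are all standard. The hard part is $\mathrm{(d)}$: a blowing up is neither flat nor covered by $\mathrm{(b)}$, and since a ``variety'' here is not assumed reduced one cannot fall back on $\mathrm{(c)}$, so one has to exploit the explicit Rees‑algebra presentation of the charts of $\operatorname{Bl}_Y X$ — getting that chart‑by‑chart argument right (in particular the identification $\mathcal{R}[(a_i\tau)^{-1}]\cong B_i[u,u^{-1}]$) is the step that requires the most care.
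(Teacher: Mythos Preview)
The paper does not actually prove this lemma: it is stated with the citation ``cf.\ \cite{EGA}, Chap.~IV, Sect.~21.4'' and left at that. Your argument is correct and supplies exactly the content behind that citation --- the reduction to the criterion ``no associated point of $X'$ maps into $|D|$'' is EGA~IV, 21.4.5, and cases (a)--(c) are immediate from it as you say. Your treatment of~(d) via the Rees algebra is clean and, as you rightly observe, genuinely needed here since the paper's notion of \emph{variety} is not assumed reduced; the chain ``$t$ a non-zero-divisor in $A$ $\Rightarrow$ in $A[\tau]$ $\Rightarrow$ in the subring $\mathcal{R}$ $\Rightarrow$ in the localization $\mathcal{R}[(a_i\tau)^{-1}]$ $\Rightarrow$ in its degree-$0$ part $B_i$'' is valid at each step. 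One small remark: in~(a) your direct flatness argument (exactness of $0\to\mathcal{O}_X\xrightarrow{t}\mathcal{O}_X$ is preserved) is the robust one; the parenthetical about associated points of $X'$ lying over those of $X$ is true but requires its own justification, so it is better presented as a consequence rather than an alternative.
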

%
%
%
%

For a \emph{normal} variety $X$ over $k$,  
take a separably closed field $\Omega$ as $\kX \subset \Omega$. 
We denote by $\xibar$ both the geometric point $\Spec(\Omega) \to \Spec(\kX)$ 
which is given by  $\kX \subset \Omega$, 
and the geometric point $\Spec (\Omega) \to \Spec(\kX) \to X$. 
The map $\Spec(\kX) \to X$ induces a canonical and surjective homomorphism 
\begin{equation}
  \label{eq:normal}
\xymatrix@C=5mm{
  G_{\kX} = \Gal(\kXbar/\kX) \simeq \pi_1(\Spec(\kX),\xibar) \ar@{->>}[r] &\piXxi, 
}
\end{equation}
where $\ol{\kX}$ is the separable closure of $\kX$ in $\Omega$ 
(\cite{SGA1} Exp.~V, Prop.~8.2; \cite{Fu}, Prop.~3.3.6). 
Its kernel is the subgroup $\Gal(\kXbar/\kX_Z)$, 
where 
$\kX_Z$ is the maximal extension of $\kX$ unramified outside $Z$, 
that is, the subfield of $\ol{\kX}$ 
generated by all finite separable extensions $E$ of $\kX$ contained in $\ol{\kX}$ 
satisfying that  
the normalization $X^E \to X$ of $X$ in $E$ is unramified. 
In particular, the surjective map \eqref{eq:normal} 
induces an isomorphism 
\[
	\Gal(\kX_Z/\kX) \simeq  \piXxi.
\]  
In the same way, 
the fundamental group $\piXDxi$ also has a description  
\[
	\Gal(\kX_D/\kX) \simeq \piXDxi.
\] 
Here, $\kX_D$ is the subfield of  
$\kX_Z$   
generated by all finite separable extensions $E$ of $\kX$ contained in $\kX_Z$ 
satisfying that  
the normalization $X^E \to X$ is 
of ramification bounded by $D$. 
When $X$ is a smooth curve, 
it is easy to describe  
the kernel of the map $\piXxi \surj \piXDxi$ given in (\ref{eq:pi}) 
explicitly as follows:

\begin{lem}
\label{lem:str}
   Let $X$ be a smooth curve over $k$,  
   and $\Xbar$ the smooth compactification of $X$. 
   We take a geometric point $\xibar$ of $X$ and a separable closure $\kXbar$ as above.  
   For  $D = \sum m_z [z] \in\DivX$, 
   we denote by  $N$ 
   the normal closed subgroup of $\piXxi$ generated by the image of 
   the ramification subgroup  
  $G_z^{m_{z}+}$ of $G_z := \Gal(\overline{\kX_z}/\kX_z)$ 
  by the homomorphism  $G_z \to \piXxi$ 
  which is given by choosing an embedding $\ol{k(X)} \inj \ol{k(X)_z}$ 
  over $\kX$ 
  and their conjugates in $\piXxi$,  
  for all $z\in Z$. 
  Then we have
  \[
  	\xymatrix@C=5mm{
  	\piXxi/N \ar[r]^{\simeq}& \piXDxi.
  	}
  \]  
\end{lem}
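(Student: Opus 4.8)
The plan is to identify the kernel of the surjection $\piXxi \surj \piXDxi$ of \eqref{eq:pi} and to check that it equals $N$. Under the identifications recalled above, $\piXxi \cong \Gal(\kX_Z/\kX)$ and $\piXDxi \cong \Gal(\kX_D/\kX)$; since this surjection comes from the inclusion of Galois categories, it becomes the restriction map $\sigma \mapsto \sigma|_{\kX_D}$, whose kernel is $M := \Gal(\kX_Z/\kX_D)$. So the lemma reduces to proving $N = M$ inside $\Gal(\kX_Z/\kX) = \piXxi$.

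The key input I will establish is a local--global translation: for a finite Galois subextension $E/\kX$ of $\kX_Z/\kX$, the covering $X^E \to X$ is of ramification bounded by $D$ if and only if, for every $z \in Z$, the image of $G_z^{m_z+}$ under the homomorphism $G_z \to \piXxi \surj \Gal(E/\kX)$ attached to a choice of embedding $\kXbar \inj \kXzbar$ over $\kX$ is trivial. To see this, fix $z$ and such an embedding; it determines a place $w$ of $E$ above $z$, and, $E/\kX$ being Galois, the completion $E_w/\kX_z$ is Galois with Galois group the decomposition subgroup of $w$ in $\Gal(E/\kX)$, the map $G_z \to \Gal(E/\kX)$ being the composite of the restriction $G_z \surj \Gal(E_w/\kX_z)$ with that inclusion. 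Hence the image of $G_z^{m_z+}$ in $\Gal(E/\kX)$ is trivial precisely when $G_z^{m_z+} \subset \Gal(\kXzbar/E_w)$, i.e.\ (Def.~\ref{def:rambdd}~(i)) when $E_w/\kX_z$ is of ramification bounded by $m_z$. The remaining places of $E$ over $z$ are the $\Gal(E/\kX)$-conjugates of $w$; their completions are $\kX_z$-isomorphic to $E_w$, so, the upper-numbering filtration being intrinsic to a local extension, being of ramification bounded by $m_z$ holds at one place over $z$ iff at all of them. Combined with Def.~\ref{def:rambdd}~(ii) (applicable since $X$ is a curve), this gives the asserted equivalence.

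Granting this, $N \subset M$ is immediate: if $E/\kX$ is a finite Galois subextension of $\kX_D/\kX$, then $X^E \to X$ is of ramification bounded by $D$, so each $G_z^{m_z+}$ has trivial image in $\Gal(E/\kX)$, and since a trivial subgroup has only trivial conjugates, all of $N$ dies in $\Gal(E/\kX)$; thus $N \subset \Gal(\kX_Z/E)$. As $\kX_D$ is generated by such $E$ (one may pass to Galois closures using Lem.~\ref{lem:fund}~(ii)), intersecting over them yields $N \subset \Gal(\kX_Z/\kX_D) = M$. For the reverse inclusion, put $F := (\kX_Z)^N$, so that $\Gal(\kX_Z/F) = N$ (here $N$ is closed). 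For each finite Galois $E/\kX$ with $E \subset F$, the subgroup $N$, hence every conjugate of every $G_z^{m_z+}$, acts trivially on $E$; by the local--global translation $X^E \to X$ is then of ramification bounded by $D$, so $E \subset \kX_D$. Taking the compositum of all such $E$ gives $F \subset \kX_D$, whence $M = \Gal(\kX_Z/\kX_D) \subset \Gal(\kX_Z/F) = N$. Therefore $N = M$ and $\piXxi/N \isomto \piXDxi$.

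I expect the second paragraph to be where the real work lies: one must correctly identify the decomposition subgroup of a place of $E$ over $z$ as the image of $G_z$, and see that Definition~\ref{def:rambdd}~(ii)'s requirement ``at every place $z'$ over $z$'' matches the vanishing of the images of all $\Gal(E/\kX)$-conjugates of $G_z^{m_z+}$ --- being careful, in particular, that the property ``ramification bounded by $m_z$'', governed by the upper-numbering filtration, is stable under the $\kX_z$-isomorphisms between the completions at conjugate places.
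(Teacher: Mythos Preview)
The paper does not supply a proof of this lemma: the sentence preceding it calls the description of the kernel ``easy'', and the statement is left unproved. Your argument is precisely the natural verification the paper omits---identifying the kernel of $\piXxi \surj \piXDxi$ with $\Gal(\kX_Z/\kX_D)$, translating Definition~\ref{def:rambdd}~(ii) at each $z$ into the vanishing of the image of $G_z^{m_z+}$ in $\Gal(E/\kX)$, and matching this with $N$ via infinite Galois theory---and it is correct. One minor point: when you pass to Galois closures inside $\kX_D$ you are tacitly using, besides Lemma~\ref{lem:fund}~(ii), that compositums preserve bounded ramification (clear from $G_K^{m_z+} \subset G_{L_1}\cap G_{L_2} = G_{L_1L_2}$); alternatively, you could simply cite that $\Cov(X,D)$ is a Galois category, so $\kX_D/\kX$ is Galois and hence a union of finite Galois subextensions.
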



For each finite \'etale morphism $f:X' \to X$ of smooth \emph{curves} over $k$, 
let $\fbar:\Xpbar\to\Xbar$ be the canonical extension of $f$ 
to the smooth compactifications. 
We denote by $D_{X'/X}$ the discriminant  
for the extension of the function fields $k(X')/k(X)$ (\cite{SerCL}, Chap.~III, Sect.~3). 
In the following, we write the discriminant additively as a divisor  
\[
  D_{X'/X} = \sum_{x\in \Xbar} v_x(D_{X'/X}) [x] 
\]
with the multiplicity $v_x(D_{X'/X}) \in \Z_{\ge 0}$ at $x$. 
The ramification locus of $\fbar$ 
coincides with the support $|D_{X'/X}| =\set{x \in \Xbar | 
v_x(D_{X'/X})>0}$ (\cite{SerCL}, Chap.~III, Sect.~5, Cor.~1)
so that 
one can consider the discriminant $D_{X'/X}$ as a Cartier divisor with support in $Z = \Xbar \ssm X$. 

\begin{prop}
\label{prop:disc}
Let $f:X'\to X$ be a finite \'etale morphism of smooth curves over $k$, 
and 
$\Xbar$ the smooth compactification. 
If we have $D_{X'/X} \le D$ for some $D\in\DivX$, 
then there exists $D'\in\DivX$ which depends only on $D$ and the degree of $f$ 
such that 
the ramification of $f:X'\to X$ is bounded by $D'$.
\end{prop}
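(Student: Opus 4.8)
The plan is to pass to completions, reducing the statement to a purely local one, and then to use that the discriminant controls the exponent of the different, which in turn bounds the \emph{lower}-numbering ramification breaks of the Galois closure; passing from the lower to the upper numbering via the Herbrand function costs only a multiplicative constant, which is absorbed by enlarging $D$. Write $n=\deg f$ and $D=\sum_{z\in Z}m_z[z]$. As $X'$ is a smooth, hence integral, curve, $k(X')/k(X)$ is finite separable of degree $n$, and for $z\in Z$ and $z'\in\Xpbar$ above $z$ the extension $k(X')_{z'}/k(X)_z$ of local fields is finite separable of degree $e_{z'}f_{z'}\le n$. Since the discriminant is the norm of the different (\cite{SerCL}, Chap.~III), the different exponent $v_{z'}(\mathfrak{D}_{k(X')_{z'}/k(X)_z})$ is at most $v_z(D_{X'/X})$, hence at most $m_z$ by the hypothesis $D_{X'/X}\le D$. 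So, by Definition \ref{def:rambdd}(ii), it suffices to produce a function $\Lambda(d,n)$ such that every finite separable extension $L/K$ of local fields with $[L:K]\le n$ and $v_L(\mathfrak{D}_{L/K})\le d$ is of ramification bounded by $\Lambda(d,n)$; one then sets $D'=\sum_{z\in Z}\Lambda(m_z,n)[z]$, which depends only on $D$ and $n$.

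To prove the local assertion, let $\wt L/K$ be the Galois closure of $L/K$; by Lemma \ref{lem:fund}(ii) it is enough to bound the ramification of $\wt L/K$. Put $G=\Gal(\wt L/K)$ and $H=\Gal(\wt L/L)$. As $\wt L$ is generated by the $K$-conjugates of $L$, the group $G$ acts faithfully on the $[L:K]$ embeddings of $L$, so $|G|$ divides $[L:K]!$ and $e(\wt L/L)\le[\wt L:L]\le(n-1)!$. By the tower formula for the different and Lemma \ref{lem:fil}(i), which gives $\Gal(\wt L/L)_i=G_i\cap H$, one computes
\[
  e(\wt L/L)\cdot v_L(\mathfrak{D}_{L/K})=\sum_{i\ge 0}\bigl(|G_i|-1\bigr)-\sum_{i\ge 0}\bigl(|G_i\cap H|-1\bigr)=\sum_{i\ge 0}\bigl(|G_i|-|G_i\cap H|\bigr).
\]
Since $\wt L$ is the Galois closure of $L$, the normal core of $H$ in $G$ is trivial; as every $G_i$ is normal in $G$, this forces $G_i\not\subseteq H$ whenever $G_i\ne\{1\}$, so the $i$-th summand is then $\ge1$. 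Hence, if $b$ is the largest integer with $G_b\ne\{1\}$, the right-hand side is $\ge b+1$, whence $b<e(\wt L/L)\cdot v_L(\mathfrak{D}_{L/K})\le(n-1)!\,d$.

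Finally, since $\psi_{\wt L/K}(\mu)\ge\mu$, every $\lambda'>(n-1)!\,d$ satisfies $\psi_{\wt L/K}(\lambda')>b$, hence $\Gal(\wt L/K)^{\lambda'}=\Gal(\wt L/K)_{\psi_{\wt L/K}(\lambda')}=\{1\}$; by Lemma \ref{lem:fund}(iii), $\wt L/K$, and therefore $L/K$, is of ramification bounded by $(n-1)!\,d$. So $\Lambda(d,n)=(n-1)!\,d$ works, and $D'=(n-1)!\cdot D$. I expect the main obstacle to be the step in the second paragraph that rules out the top ramification group of $\Gal(\wt L/K)$ being contained in $H$ (in which case the different of $L/K$ would fail to detect it): this relies precisely on $\wt L$ being the Galois closure of $L$, so that the normal core of $H$ is trivial, together with the normality of the ramification subgroups $G_i$. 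The remaining manipulations—the tower formula for the different and the inequality $\psi_{\wt L/K}\ge\mathrm{id}$—are routine.
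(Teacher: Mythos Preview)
Your argument is correct, and the local reduction is the same as the paper's, but the core local step differs. The paper first bounds the discriminant of the Galois closure $\wt L/K$ by proving a separate lemma that $\Dif_{\wt L/K}\supset \prod_i \Dif_{K_i/K}$ for the conjugates $K_i$ of $L=K'$ (Lem.~\ref{lem:dif}), obtaining $v_K(D_{\wt L/K})\le n\cdot n!\cdot m$; it then applies Hilbert's formula directly to the Galois extension $\wt L/K$ to conclude that each nontrivial $\sigma$ satisfies $v_{\wt L}(\sigma(\theta)-\theta)\le v_K(D_{\wt L/K})$, so the ramification is bounded by $n\cdot n!\cdot m$. You instead combine the tower formula for the different with Hilbert's formula for both $\wt L/K$ and $\wt L/L$, and use the observation that each nontrivial $G_i$, being normal in $G$, cannot lie inside $H=\Gal(\wt L/L)$ because the normal core of $H$ is trivial. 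This bypasses Lem.~\ref{lem:dif} entirely and yields the sharper explicit constant $D'=(n-1)!\,D$ in place of the paper's $n\cdot n!\,D$. The trade-off is that the paper's route isolates a reusable statement about differents of composita, while yours is more self-contained and gives a better bound.
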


To show this proposition, we prepare the following notation and a lemma: 
For a finite separable extension $K'/K$ of local fields, 
we denote by 

\begin{itemize}
	\item $D_{K'/K} \subset O_K$\,: the discriminant of $K'/K$, and 
	\item $\DifKp \subset O_{K'}$\,: the different of $K'/K$ (written multiplicatively as usual) 
\end{itemize}
(\Cf \cite{SerCL}, Chap.~III, Sect.~3). 

\begin{lem}
\label{lem:dif}
 	Let $K$ be a local field. 
 	Let 
	$K_1$ and $K_2$ be two finite separable extensions of $K$,  
	and $L = K_1K_2$ the compositum. 
	As ideals in the valuation ring $O_L$, we have 
	$\Dif_{L/K} \supset \Dif_{K_1/K}\Dif_{K_2/K}$.
\end{lem}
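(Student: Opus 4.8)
The plan is to reduce the inclusion to a single divisibility of differents in the tower $K\subset K_1\subset L$ and then to verify that divisibility by a local computation with well-chosen polynomial generators. Recall first the transitivity of the different in a tower (\cite{SerCL}, Chap.~III, Sect.~4, Prop.~8): since $K_1/K$, $K_2/K$ and hence $L/K$ are finite separable, one has, as ideals of $O_L$,
\[
\Dif_{L/K}=\Dif_{L/K_1}\cdot\bigl(\Dif_{K_1/K}O_L\bigr).
\]
In the statement of the lemma, $\Dif_{K_1/K}\Dif_{K_2/K}$ denotes the product of the ideals $\Dif_{K_1/K}O_L$ and $\Dif_{K_2/K}O_L$ of the discrete valuation ring $O_L$; cancelling the factor $\Dif_{K_1/K}O_L$, the assertion becomes equivalent to the single inclusion
\[
\Dif_{L/K_1}\ \supseteq\ \Dif_{K_2/K}\,O_L ,
\]
which I call $(\star)$. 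It therefore suffices to prove $(\star)$.

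To prove $(\star)$, I would first choose $\theta\in O_{K_2}$ with $O_{K_2}=O_K[\theta]$; such a monogenic generator exists because the residue fields are perfect (\cite{SerCL}, Chap.~III, Sect.~6, Prop.~12). Let $g\in O_K[T]$ be the minimal polynomial of $\theta$ over $K$; then $\Dif_{K_2/K}=\bigl(g'(\theta)\bigr)$ as an ideal of $O_{K_2}$ (\cite{SerCL}, Chap.~III, Sect.~6), hence $\Dif_{K_2/K}O_L=g'(\theta)O_L$. Next, since $K_2=K(\theta)$ we have $L=K_1K_2=K_1(\theta)$; let $h\in K_1[T]$ be the minimal polynomial of $\theta$ over $K_1$. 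As $O_{K_1}$ is integrally closed, $h$ and the cofactor $q:=g/h$ both lie in $O_{K_1}[T]$, and differentiating $g=hq$ and using $h(\theta)=0$ gives $g'(\theta)=h'(\theta)\,q(\theta)$, so $g'(\theta)O_L\subseteq h'(\theta)O_L$. Finally, from $O_{K_1}[\theta]\subseteq O_L$ — both rings having fraction field $L$ — a comparison of complementary modules over $O_{K_1}$ yields
\[
\mathfrak C_{L/K_1}\ \subseteq\ \mathfrak C_{O_{K_1}[\theta]/O_{K_1}}\ =\ \frac{1}{h'(\theta)}\,O_{K_1}[\theta]\ \subseteq\ \frac{1}{h'(\theta)}\,O_L
\]
(\cite{SerCL}, Chap.~III, Sect.~6), and on inverting fractional $O_L$-ideals, $\Dif_{L/K_1}=\mathfrak C_{L/K_1}^{-1}\supseteq h'(\theta)O_L$. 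Chaining the two containments,
\[
\Dif_{K_2/K}O_L\ =\ g'(\theta)O_L\ \subseteq\ h'(\theta)O_L\ \subseteq\ \Dif_{L/K_1},
\]
which is $(\star)$; combining $(\star)$ with the transitivity formula then gives $\Dif_{L/K}\supseteq(\Dif_{K_2/K}O_L)(\Dif_{K_1/K}O_L)=\Dif_{K_1/K}\Dif_{K_2/K}$.

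The main obstacle — really the only delicate point — is that $O_L$ need not be monogenic over $O_{K_1}$, so one cannot expect the equality $\Dif_{L/K_1}=\bigl(h'(\theta)\bigr)$ but only the divisibility $\Dif_{L/K_1}\supseteq h'(\theta)O_L$; this, however, is exactly what the inclusion $O_{K_1}[\theta]\subseteq O_L$ provides, and it is all that is needed. By contrast, over $K$ one does use the exact formula $\Dif_{K_2/K}=\bigl(g'(\theta)\bigr)$, and this is why the monogenicity of $O_{K_2}$ over $O_K$ afforded by the perfect residue field is genuinely invoked there. Apart from this, the argument is a formal manipulation of fractional ideals.
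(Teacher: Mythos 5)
Your proof is correct and is essentially the paper's own argument with the roles of $K_1$ and $K_2$ interchanged (the statement being symmetric): both factor the minimal polynomial of a monogenic generator of one of the two subextensions over the other, deduce $\Dif_{K_i/K}O_L\subseteq\Dif_{L/K_j}$, and conclude by transitivity of the different. Your explicit cancellation step and the complementary-module derivation of $\Dif_{L/K_1}\supseteq h'(\theta)O_L$ just spell out what the paper cites directly from \cite{SerCL}, Chap.~III, Sect.~6, Cor.~2.
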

\begin{proof}
	Take $\theta \in O_{K_1}$ such that $O_{K_1} = O_K[\theta]$ 
	(\cite{SerCL}, Chap.~III, Sect.~6, Prop.~12). 
	Let $f \in O_K[T]$ be the minimal polynomial for $\theta$ over $K$. 
	From the equality $K_1 = K(\theta)$, 
	we have $L = K_1K_2 = K_2(\theta)$. 
	If we denote by $g\in O_{K_2}[T]$ the minimal polynomial for $\theta$ over $K_2$, 
	one can write $f =gh$ in $O_{K_2}[T]$ for some $h\in O_{K_2}[T]$. 
	Hence,  
	\begin{align*}
	  \Dif_{K_1/K}O_L  
	  &= f'(\theta)O_L\quad (\mbox{by \cite{SerCL}, Chap.~III, Sect.~6, Cor.~2}) \\
	  &= g'(\theta)h(\theta) O_L\quad (\mbox{by $f = gh$}) \\
	  &\subset g'(\theta)O_L \\ 
	  &\subset \Dif_{L/K_2} \quad (\mbox{by \cite{SerCL}, Chap.~III, Sect.~6, Cor.~2 again}). 
	\end{align*}
	From the transitivity of the differents (\cite{SerCL}, Chap.~III, Sect.~4, Prop.~8), 
	we obtain 
	$\Dif_{L/K} = \Dif_{L/K_2}\Dif_{K_2/K} \supset \Dif_{K_1/K}\Dif_{K_2/K}$ 
	as ideals in $O_L$.
\end{proof}

\begin{proof}[Proof of Prop.\ \ref{prop:disc}]
Let $\fbar:\Xpbar\to\Xbar$ be the canonical extension of $f$ to the smooth compactifications. 
The multiplicity of $D_{X'/X}$ at $z \in Z$ 
is written as the sum 
\[
  v_z(D_{X'/X}) = \sum_{z'\in \Xpbar,\ \fbar(z') = z} v_{z}(D_{k(X')_{z'}/k(X)_z}), 
\]
where $v_{z}(D_{k(X')_{z'}/k(X)_z})$ is 
the valuation of the discriminant 
$D_{k(X')_{z'}/k(X)_z}$ 
at $z$ (\cite{N}, Chap.~III, Sect.~2, Cor.~2.11). 
It is enough to show the following lemma. 
\end{proof}

\begin{lem} 
\label{lem:local_disc}
Let $K'/K$ be a separable 
extension of local fields with $[K':K] = n$. 
We denote by $v_K(D_{K'/K})$ the valuation of the discriminant $D_{K'/K}$. 
%
If $v_K(D_{K'/K})\le m$ for some  $m \in \Z_{\ge 0}$, 
then there exists 
$m'\in \Z_{\ge 0}$ which depends only on $m$ and $n$ 
such that 
the extension $K'/K$ is of ramification bounded by $m'$. 
\end{lem}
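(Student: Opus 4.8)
The plan is to reduce to a Galois extension and then read off the ramification break from Serre's different formula. Let $\wt{K'}\subseteq\Kbar$ be the Galois closure of $K'/K$. By Lem.~\ref{lem:fund}~(ii) it suffices to produce $m'$, depending only on $m$ and $n$, such that $\wt{K'}/K$ is of ramification bounded by $m'$; we will take $m'=n!\cdot m$.

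First I would bound the different $\Dif_{\wt{K'}/K}$ in terms of $m$ and $n$. Since $K'/K$ is separable of degree $n$, the field $\wt{K'}$ is the compositum of the at most $n$ conjugate fields $K'_1,\dots,K'_r$ of $K'$ inside $\Kbar$, and $[\wt{K'}:K]$ divides $n!$ because $\Gal(\wt{K'}/K)$ embeds into the symmetric group $S_n$. For each $i$ we have $v_K(D_{K'_i/K})=v_K(D_{K'/K})\le m$ (conjugate extensions have equal discriminant), and since $D_{K'_i/K}=N_{K'_i/K}(\Dif_{K'_i/K})$ this gives $v_{K'_i}(\Dif_{K'_i/K})\le m$. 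Applying Lem.~\ref{lem:dif} repeatedly to the compositum yields $\Dif_{\wt{K'}/K}\supseteq\prod_{i=1}^{r}\Dif_{K'_i/K}O_{\wt{K'}}$, hence
\[
v_{\wt{K'}}(\Dif_{\wt{K'}/K})\;\le\;\sum_{i=1}^{r}[\wt{K'}:K'_i]\cdot v_{K'_i}(\Dif_{K'_i/K})\;\le\; n!\cdot m\;=:\;M,
\]
a quantity depending only on $n$ and $m$ (here $[\wt{K'}:K'_i]=[\wt{K'}:K]/n\le (n-1)!$ and $r\le n$).

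Second, I would turn this bound into a bound on the ramification filtration of $G:=\Gal(\wt{K'}/K)$. By Serre's formula (\cite{SerCL}, Chap.~IV, Sect.~1, Prop.~4) one has $v_{\wt{K'}}(\Dif_{\wt{K'}/K})=\sum_{i\ge 0}(|G_i|-1)$, where $G_i=\Gal(\wt{K'}/K)_i$ is the $i$-th lower-numbering ramification group. The $G_i$ decrease, so the indices with $G_i\ne\set 1$ form an initial segment, each of whose terms contributes at least $1$ to the sum; consequently $G_\mu=\set 1$ for every real $\mu\ge M$ (recall $G_\mu=G_{\lceil\mu\rceil}$). Passing to the upper numbering via \eqref{eq:upper}, and using that the Herbrand function satisfies $\varphi_{\wt{K'}/K}(\mu)\le\mu$ for $\mu\ge 0$ (the integrand in \eqref{eq:Herbrand} is $\le 1$), we get $G^{\varphi_{\wt{K'}/K}(M)}=G_M=\set 1$ with $\varphi_{\wt{K'}/K}(M)\le M$. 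Since the upper-numbering filtration is decreasing, $G^{\lambda'}=\set 1$ for every $\lambda'>M$, so $\wt{K'}/K$ is of ramification bounded by $M$ by Lem.~\ref{lem:fund}~(iii), and then so is $K'/K$ by Lem.~\ref{lem:fund}~(ii). Thus $m':=M=n!\cdot m$ works.

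The step I expect to be the main obstacle is the first one: bounding $v_{\wt{K'}}(\Dif_{\wt{K'}/K})$ purely in terms of $m$ and $n$. One must track differents and discriminants through the tower $K\subseteq K'_i\subseteq\wt{K'}$ while keeping the possibly large ramification index $[\wt{K'}:K'_i]$ under control --- this is exactly where the group-theoretic degree bound $[\wt{K'}:K]\mid n!$ is needed --- and it is essential that Lem.~\ref{lem:dif} supplies an \emph{inclusion} of ideals, i.e.\ an \emph{upper} bound on $v_{\wt{K'}}(\Dif_{\wt{K'}/K})$, which is the inequality direction required here. Once the different of the Galois closure is bounded in this way, the passage through Serre's formula and the Herbrand function is routine.
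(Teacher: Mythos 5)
Your argument is correct and essentially the same as the paper's: both reduce to the Galois closure by applying Lem.~\ref{lem:dif} to the conjugate fields, bound the different/discriminant of the closure in terms of $m$ and $n$, and then read off the vanishing of the high ramification groups from the different formula before converting to upper numbering via $\varphi\le\mathrm{id}$. The only (immaterial) difference is bookkeeping: the paper bounds $v_K(D_{L/K})\le nn!m$ and applies the Hilbert formula term by term to get $v_{L}(\sigma\theta-\theta)\le v_K(D_{L/K})$, while you bound $v_{\wt{K'}}(\Dif_{\wt{K'}/K})\le n!m$ and use the equivalent grouped form $\sum_{i\ge0}(|G_i|-1)$, yielding a marginally smaller $m'$.
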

\begin{proof}
\textbf{(Reduction to Galois)} 
Let 
$K_1,\ldots ,K_n$ be the conjugate fields of $K' = K_1$. 
We denote by  $L = K_1\cdots K_n$ the Galois closure of $K'/K$. 
By using Lem.~\ref{lem:dif} repeatedly, we have 
$\DifL \supset \mathfrak{D}_{K_1/K}\cdots \mathfrak{D}_{K_n/K}$ 
as ideals in $O_L$. 
From the equality 
$D_{L/K} = N_{L/K}(\mathfrak{D}_{L/K})$ (\cite{SerCL}, Chap.~III, Prop.~6),   
we obtain 
\[
  v_K(D_{L/K}) \le nn!v_K(D_{K'/K}) \le nn!m. 
\]
On the other hand, 
for any $\lambda \in \R_{\ge 0}$, 
the ramification of $K'/K$ is bounded by $\lambda$ if and only if $L/K$ is of ramification bounded by $\lambda$ (Lem.\ \ref{lem:fund} (ii)). 
We may assume that $K'/K$ is a Galois extension. 


\sn 
\textbf{(Proof of the lemma)} 
It is enough to show $\Gal(K'/K)^{\lambda} = \set{1}$ for any $\lambda > m$ (Lem.\ \ref{lem:fund} (iii)). 
Take $\theta \in O_{K'}$ such that $O_{K'} = O_K[\theta]$ (\cite{SerCL}, Chap.~III, Sect.~6, Prop.~12). 
The Hilbert formula (\cite{SerCL}, Chap.~IV, Sect.~2, Prop.~4) gives 
\begin{equation}
\label{eq:Hilbert}
	v_{K'}(\DifKp) = \sum_{\sigma \in \Gal(K'/K)\ssm \set{1}} v_{K'}(\sigma(\theta) - \theta). 
\end{equation}
For any $\sigma \neq 1$ in $\Gal(K'/K)$, 
we have the following inequalities 
\begin{align*}
  m 
  & \ge v_K(D_{K'/K}) 
    \quad (\mbox{from the assumption})\\
  & \ge v_{K'}(\mathfrak{D}_{K'/K}) 
    \quad (\mbox{by $D_{K'/K} = N_{K'/K}(\mathfrak{D}_{K'/K})$})\\
  & \ge v_{K'}(\sigma(\theta)-\theta)  
    \quad  (\mbox{by \eqref{eq:Hilbert}}).
\end{align*}  
Hence, 
$\Gal(K'/K)^{\lambda} = \Gal(K'/K)_{\psi_{K'/K}(\lambda)} 
\subset \Gal(K'/K)_{\lambda}  = \set{1}$
 for any $\lambda>m$.
\end{proof}

\begin{rem}
\label{rem:disc}
For a finite \'etale morphism $f:X'\to X$ of smooth curves over $k$, 
by using the Hilbert's formula \eqref{eq:Hilbert} locally as in Prop.\ \ref{prop:disc}, 
we have the following proposition: 
If the ramification of $f:X'\to X$ is bounded by 
$D$ for some $D\in\DivX$, 
then there exists $D'\in\DivX$ which depends only on $D$ and the degree of $f$ 
such that $D_{X'/X} \le D'$.
\end{rem}

\subsection*{$l$-adic sheaves}
For a pointed connected Noetherian scheme $(X,\xbar)$ 
(in which the fixed prime $l$ is invertible \Cf Notation), 
we have an equivalence of categories (\cite{Fu}, Cor.~10.1.24):
\begin{equation}
\label{eq:l-adic}
\xymatrix@C=5mm{
\mbox{(smooth $\Qlbar$-sheaves on $X$)} \ar[r]^-{\simeq} & 
\mbox{($\Qlbar$-representations $\piXx \to \Aut(V)$)}.  
}
\end{equation} 
Here, a $\Qlbar$-representation 
we mean  
a continuous homomorphism 
$\rho:\piXx \to \Aut(V)$ with a finite dimensional $\Qlbar$-vector space $V$ 
(for the precise definition, see \cite{WII}, 1.1.6, or \cite{Fu}, Sect.~10.1).
The equivalence above is given by 
$\F \mapsto \Fx = V$, 
where $\Fx$ is the stalk of $\F$ at $\xbar$. 
\begin{lem}
 \label{lem:pullback rep}
Let $f:(X',\xpbar) \to (X,\xbar)$ be a morphism of pointed connected Noetherian schemes, 
and $\F$ a smooth $\Qlbar$-sheaf on $X$. 
Identifying the isomorphism $\Fx \simeq (f^{\ast}\F)_{\xpbar}$ on stalks,  
the $\Qlbar$-representation $\rho':\pi_1(X',\xpbar) \to \Aut((f^{\ast}\F)_{\xpbar})$ 
corresponding to $f^{\ast}\F$  
makes the following diagram commutative: 
\[
   \xymatrix@C=15mm{
     \pi_1(X,\xbar) \ar[r]^{\rho} & \Aut(\F_{\xbar}) \\
     \pi_1(X',\xpbar) \ar[r]^{\rho'}\ar[u]^{\varphi} & \Aut( (f^{\ast}\F)_{\xpbar}),\ar@{=}[u] 
   }
\]
where $\varphi$ is the induced homomorphism of fundamental groups from $f$,  
and $\rho$ is the representation corresponding to $\F$. 
\end{lem}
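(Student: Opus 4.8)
The assertion is exactly the functoriality of the equivalence \eqref{eq:l-adic} with respect to pullback, and the plan is to obtain it by unwinding the layered definition of a smooth $\Qlbar$-sheaf, reducing everything to the case of finite \'etale coverings, where the compatibility is built into the very construction of $\varphi$.

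First I would treat the case where $\F$ is a locally constant constructible sheaf of finite sets, i.e.\ corresponds under \eqref{eq:Gal} to an object $Y\to X$ of $\Cov(X)$. By construction (\cite{SGA1}, Exp.~V, Sect.~6), the homomorphism $\varphi:\pi_1(X',\xpbar)\to\piXx$ induced by $f$ is the one making the functor $\Cov(X)\to\Cov(X');\ Y\mapsto Y\times_X X'$ compatible with the fiber functors: since $f\circ\xpbar=\xbar$, there is a canonical bijection $\Hom_{X'}(\xpbar,Y\times_X X')\simeq\Hom_X(\xbar,Y)$ that is equivariant for the $\pi_1(X',\xpbar)$-action on the left and for the $\piXx$-action composed with $\varphi$ on the right. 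Via \eqref{eq:Gal}, this is precisely the claim for such $\F$, the bijection above being the canonical stalk identification $(f^{\ast}\F)_{\xpbar}\simeq\Fx$. Equipping the fibers with a $\Z/l^n$-module structure---which is respected by $f^{\ast}$, by the stalk identification, and by the passage to representations---gives the same statement for smooth $\Z/l^n$-sheaves.

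Next I would propagate this up the coefficient tower. A smooth $\Z_l$-sheaf is an inverse system $(\F_n)_n$ of smooth $\Z/l^n$-sheaves; the pullback $f^{\ast}\F=(f^{\ast}\F_n)_n$ is formed level by level, and the representations $\rho$, $\rho'$ are the inverse limits of the corresponding representations at each level. Applying the previous step at every level $n$ and using that the stalk identifications are compatible with the transition maps, one gets the $\varphi$-equivariance at each level, whence $\rho'=\rho\circ\varphi$ after passing to the limit (continuity being automatic). Finally, a smooth $\Ql$-sheaf arises from a smooth $\Z_l$-sheaf by inverting $l$, and a smooth $\Qlbar$-sheaf is one defined over a finite extension $E_\lambda/\Ql$ inside $\Qlbar$ and then base-changed to $\Qlbar$ (see \cite{Fu}, Sect.~10.1; \cite{WII}, 1.1.6); since $f^{\ast}$, the stalk identification, and the formation of the representation all commute with inverting $l$ and with extension of scalars, the relation $\rho'=\rho\circ\varphi$ survives each step. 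This establishes the commutativity of the square.

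I expect no genuine obstacle here: the one point demanding care is that the identification $\Fx\simeq(f^{\ast}\F)_{\xpbar}$ appearing in the statement be the canonical one coming from $f\circ\xpbar=\xbar$, equivalently the one for which the fiber functors commute in the finite case. It is with respect to that identification, and no other, that the diagram commutes, so the whole argument really rests on pinning it down at the level of \'etale coverings; everything afterwards is formal bookkeeping.
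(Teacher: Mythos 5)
Your proposal is correct and follows essentially the same route as the paper's proof: both reduce to the case of a locally constant sheaf with finite stalks represented by an \'etale covering $Y\to X$, invoke the compatibility of the fiber functors with the base-change functor $\Cov(X)\to\Cov(X')$ (which is how $\varphi$ is defined), and then pass up through the $\lambda$-adic tower and a finite extension $E$ of $\Q_l$ to reach $\Qlbar$-sheaves. The only difference is presentational --- you build bottom-up where the paper reduces top-down --- and your closing remark about pinning down the canonical stalk identification is exactly the point the paper's hypothesis ``identifying the isomorphism $\Fx\simeq(f^{\ast}\F)_{\xpbar}$'' is meant to address.
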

\begin{proof}
  For some finite extension $E$ of $\Q_l$  in $\Qlbar$ with valuation ring $R = O_E$, 
  the $\Qlbar$-sheaf $\F$ is represented by an $E$-sheaf. 
  So we may assume that $\F = (\F_n)_{n\ge 0}$ is an $\lambda$-adic sheaf, 
  where $\lambda$ is a uniformizer of $R$. 
     Recall that the representation corresponding to $\F$ is given by 
     taking the inverse limit of the representation 
     $\pi_1(X,\ol{x}) \to \Aut_{R/(\lambda^{n+1})}((\F_{n})_{\ol{x}})$
    (\Cf \cite{Fu}, proof of Prop.~10.1.23),  
  and  $f^{\ast}\F = (f^{\ast}\F_n)_{n\ge 0}$ by the very definition. 
  Without loss of generality, we may assume that 
  $\F$ is a locally constant sheaf on $X$ with finite stalks 
  and $\rho:\piXx \to \Aut(\Fx)$ 
  is given by the action of $\piXx$ on $\Fx$. 
  There exists an \'etale covering $Y \to X$ such that 
  $\ul{Y} \simeq \F$, 
  where $\ul{Y} := \Hom_X(-,Y)$ (\cite{Fu}, Prop.~5.8.1 (i)). 
  On the other hand, 
  we have $f^{\ast}\F\simeq \ul{Y\times_X X'}$ (\cite{Fu}, Prop.~5.2.7). 
  The fiber functors at $\xbar$ and $\xpbar$ (\ref{eq:Gal}) make the following diagram commutative: 
  \[
    \xymatrix@C=15mm{
      \Cov(X)\ar[r]^-{\simeq}\ar[d]_{-\times_X X'} 
      & \piXxSets\ar[d]^{\varphi_{\ast}} \\
      \Cov(X')\ar[r]^-{\simeq} & \piXpxSets,
    }
  \]
  where $\varphi_{\ast}$ is the functor induced by the homomorphism  $\varphi$ 
  (\cite{Fu}, Prop.~3.3.1). 
  As a result, 
  the action of $\pi_1(X',\xpbar)$ on $(f^{\ast}\F)_{\xpbar}$ 
  comes from that of $\piXx$ on $\F_{\xbar}$. 
\end{proof}
%
%

\begin{dfn}[\cite{EK}, Sect.~3, see also \cite{KR}, Sect.~10.1]
\label{def:Swan}
\begin{enumerate}
\item 
Let 
$K$ be a local field with residue field of characteristic $p$. 
For a $\Qlbar$-representation $\rho:G_K \to \Aut(V)$,  
the \textbf{Swan conductor} of $V$ is defined by  
\[
	\SwV = \displaystyle{\sum_{\lambda>0}}\lambda \dim(V^{G_K^{\lambda+}}/V^{G_K^{\lambda}}), 
\] 
where $V^{G_K^{\lambda+}}$ and $V^{G_K^{\lambda}}$ are 
the fixed subspace of $V$ by $\rho(G_K^{\lambda+})$ and $\rho(G_K^{\lambda})$ 
respectively.  
 
\item 
Let $X$ be a smooth \emph{curve} over $k$, 
and $\Xbar$ the smooth compactification of $X$. 
For 
a smooth $\Qlbar$-sheaf $\F$ on $X$, 
the \textbf{Swan conductor} of $\F$ is defined to be the effective 
Cartier divisor 
\[
	\SwF = \sum_{z \in Z} \SwzF [z] \in \DivX.
\]
Here, for each $z\in Z$, 
$\SwzF := \Sw(\F|_{\Spec(\kXz)}) \in \Z_{\ge 0}$ (Lem.\ \ref{lem:oplus} (i) below) is the Swan conductor of 
(the $\Qlbar$-representation corresponding to)
the restriction $\F|_{\Spec(\kXz)}$.

\item
Let $X$ be a \emph{variety} over $k$.
For $D \in \DivX$  
and for a smooth $\Qlbar$-sheaf $\F$ on $X$,  
we say that the \textbf{ramification of $\F$ is bounded by $D$} 
(and write as $\SwF \le D$ formally) 
if, for every $\phi:C\to X \in \CuX$, we have 
$\Sw(\phiastF) \le \phibarastD$,
where $\phiastF$ is  
the pullback of $\F$ by $\phi$. 
\end{enumerate}
\end{dfn}

\begin{lem}[\cite{KR}, Thm.~4.85,  \cite{EK}, Sect.~3.1, (3.1)]
\label{lem:oplus}
Let $K$ be a local field with residue field of characteristic $p$. 

\begin{enumerate}
	\item For a $\Qlbar$-representation $V$ of $G_K$, 
	the Swan conductor $\Sw(V)$ takes a value in $\Z_{\ge 0}$. 
	\item 
	For two $\Qlbar$-representations $V$ and $V'$ of $G_K$,  
	we have $\Sw(V\oplus V' ) = \Sw(V) + \Sw(V')$.
	\end{enumerate}
\end{lem}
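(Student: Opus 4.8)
The second assertion is purely formal. For any closed subgroup $H\subseteq G_K$ one has $(V\oplus V')^{H} = V^{H}\oplus (V')^{H}$; since $G_K^{\lambda+}\subseteq G_K^{\lambda}$, applying this with $H=G_K^{\lambda+}$ and with $H=G_K^{\lambda}$ and using additivity of dimension identifies the $\lambda$-th term $\lambda\dim\bigl((V\oplus V')^{G_K^{\lambda+}}/(V\oplus V')^{G_K^{\lambda}}\bigr)$ of the defining sum for $\Sw(V\oplus V')$ with the sum of the corresponding terms for $V$ and for $V'$. Summing over $\lambda$ — a finite sum, since a $\Qlbar$-representation has only finitely many breaks (see the reduction below) — yields $\Sw(V\oplus V') = \Sw(V)+\Sw(V')$.

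For the first assertion the plan is to reduce to the classical Swan conductor of a representation of a \emph{finite} group. The key structural input is that the wild inertia $P:=G_K^{0+}$, which is a pro-$p$ group, has finite image under any continuous $\rho\colon G_K\to\GL_n(\Qlbar)$: a compact subgroup of $\GL_n(\Qlbar)$ may be conjugated into $\GL_n(\mathcal{O})$ for the valuation ring $\mathcal{O}$ of some finite extension of $\Ql$, and the kernel of reduction modulo the maximal ideal is a pro-$l$ group, which meets the pro-$p$ group $\rho(P)$ trivially; hence $\rho(P)$ is finite. Since $G_K^{\lambda}\subseteq P$ for every $\lambda>0$, one can then choose a finite Galois extension $L/K$ so that $\rho|_{P}$ factors through the wild inertia subgroup of $\Gal(L/K)$, and the surjections $G_K^{\lambda}\surj\Gal(L/K)^{\lambda}$ of \eqref{eq:G_K^l}, compatible by Lem.~\ref{lem:fil}, identify $V^{G_K^{\lambda}}$ and $V^{G_K^{\lambda+}}$ for $\lambda>0$ with the corresponding fixed subspaces for $\Gal(L/K)$ acting on $V$. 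Thus $\Sw(V)$ coincides with the classical Swan conductor of $V$ viewed as a $\Qlbar$-representation of the finite group $\Gal(L/K)$; in particular only finitely many $\lambda$ contribute, which is also the input used above for (ii).

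It then remains to quote the classical fact that, for a finite group $G=\Gal(L/K)$, the Swan character is afforded by an honest $\Qlbar$-representation — indeed by a projective $\Z_{l}[G]$-module, via the combination of the Hasse--Arf theorem and Swan's theorem (\cite{SerCL}, Chap.~VI; see also \cite{KR}, \cite{EK}) — so that $\Sw(V)$, being the inner product of $V$ with a genuine character of $G$, is a non-negative integer. The main obstacle is thus entirely in assertion (i): assertion (ii) is formal, and within (i) the two non-formal ingredients are the finiteness of the wild monodromy $\rho(P)$ (a standard $l$-adic argument) and the integrality and positivity of the classical Swan conductor (the deep input, for which one ultimately cites Hasse--Arf and Swan); the reduction in between is bookkeeping with the ramification filtration already set up in Lemmas~\ref{lem:fil} and \ref{lem:fund}.
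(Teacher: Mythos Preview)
The paper does not prove this lemma: it is stated with attributions to \cite{KR}, Thm.~4.85 and \cite{EK}, Sect.~3.1, (3.1), and no proof is given. Your argument is essentially the standard one underlying those references and is correct: part (ii) is indeed formal, and for part (i) the reduction to a finite quotient via finiteness of $\rho(P)$ followed by the Hasse--Arf/Swan input is exactly the classical route. One small point worth tightening in your write-up: to produce the finite Galois $L/K$ you should take $N=\ker\bigl(\rho\bmod\mathfrak{m}\bigr)$, which is open normal in $G_K$, and observe that $N\cap P\subseteq\ker\rho$ because $\rho(N\cap P)$ is simultaneously pro-$p$ and pro-$l$; then $L=\Kbar^{N}$ works, since for $\lambda>0$ the kernel of $G_K^{\lambda}\surj\Gal(L/K)^{\lambda}$ is $G_K^{\lambda}\cap N\subseteq P\cap N\subseteq\ker\rho$. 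As written, ``choose $L$ so that $\rho|_P$ factors through the wild inertia of $\Gal(L/K)$'' is true but the mechanism for finding a \emph{finite} such $L/K$ deserves that one extra line.
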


\begin{rem}
\label{rem:Ar}
For use later (in Lem.\ \ref{lem:piab}), we refer to the Artin conductor 
(\Cf \cite{SerCL}, Chap.~IV and \cite{U}, Sect.~4). 

\begin{enumerate}
	\item 
	Let $K$ be a local field with residue field of characteristic $p$, and 
$\rho:G_K \to \Aut(V)$ a $\Qlbar$-representation of $G_K$. 
We assume that the restriction $\rho|_{G^0_K}$ of $\rho$ 
to $G^0_K$ has finite image. 
%
The \textbf{Artin conductor} of $V$ is defined by 
\[
  \Ar(V) = \int_{-1}^{\infty} \dim (V/V^{G_K^x}) dx. 
\]
Putting 
\[
  \epsilon(V) = \int_{-1}^0 \dim (V/V^{G_K^x}) dx = \dim(V/V^{G_K^0}),   
\]
we have  
\begin{align*}
  \Ar(V) 
  &= \epsilon(V) + \int_{0}^{\infty}\dim(V/V^{G_K^{x}})dx\\
  & = \epsilon(V) + \Sw(V).
\end{align*}
%
\item 
Let $\F$ be a smooth $\Qlbar$-sheaf on a smooth curve $X$ over $k$. 
We assume that the corresponding $\Qlbar$-representation 
$\rho:\piXx \to \Aut(\Fx)$ by \eqref{eq:l-adic} has finite image, for simplicity.
The global conductors of $\F$ are defined by  
\[
	\Ar(\F) = \sum_{z \in Z} \Ar_z(\F) [z]\quad\mbox{and}\quad \epsilon(\F) = \sum_{z \in Z}\epsilon_z(\F)  [z] \in \DivX, 
\]
by using 
the local conductors $\Ar_z(\F) := \Ar(\F|_{\Spec(\kXz)})$ and 
$\epsilon_z(\F) := \epsilon(\F|_{\Spec(\kXz)})$ respectively. 
From the relation of the local Artin and Swan conductors noted above, we have 
\begin{equation}
	\label{eq:epsilon}
	\Ar(\F) = \epsilon(\F) + \Sw(\F).
\end{equation}
\end{enumerate}
\end{rem}

\begin{lem}
  \label{lem:bdd}
  Let $(X,\xbar)$ be a pointed variety over $k$, and 
  $\F$ a smooth $\Qlbar$-sheaf on $X$ of rank $r$. 
  If the corresponding $\Qlbar$-representation $\rho:\piXx \to \Aut(\Fx)$ 
  factors through $\piXDx$ for some $D \in \DivX$, 
  then we have 
  $\SwF \le rD$.  
\end{lem}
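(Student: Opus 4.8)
The plan is to reduce the inequality $\SwF\le rD$ to a purely local estimate for Swan conductors on a smooth curve. By Definition~\ref{def:Swan}~(iii) the claim says that for every $\phi\colon C\to X$ in $\CuX$ one has $\Sw(\phiastF)\le\phibarast(rD)=r\,\phibarastD$, so I fix such a $\phi$. As $k$ is perfect, $C$ is a smooth curve over $k$; write $\Cbar$ for its smooth compactification, $\phibar\colon\Cbar\to\Xbar$ for the canonical extension of $\phi$, and $Z'=\Cbar\ssm C$, so that $\phibarastD\in\Div_{Z'}(\Cbar)$ is defined (Lemma~\ref{lem:divisor}~(c): the generic point of $\Cbar$ maps to the generic point of an integral curve in $X$, hence away from $Z\supseteq|D|$). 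By Lemma~\ref{lem:pullback rep}, after the usual identification of stalks the smooth $\Qlbar$-sheaf $\phiastF$ corresponds to $\rho\circ\varphi$, where $\varphi\colon\piC\to\piXx$ is induced by $\phi$ (base points chosen compatibly; the choice is irrelevant up to conjugacy). Lemma~\ref{lem:pull-back} then furnishes a canonical homomorphism $\piCD\to\piXDx$ which, by the functorial construction in its proof, fits into a commutative square together with $\varphi$ and the canonical surjections $\piC\surj\piCD$ and $\piXx\surj\piXDx$. Hence the hypothesis that $\rho$ factors through $\piXDx$ forces $\rho\circ\varphi$ to factor through $\piCD$. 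Replacing $(X,\F,D)$ by $(C,\phiastF,\phibarastD)$, we are reduced to the case in which $X$ is a smooth curve (where $\CuX=\{X\}$, so Definition~\ref{def:Swan}~(iii) unwinds to~(ii)).

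So suppose $X$ is a smooth curve over $k$ with smooth compactification $\Xbar$, let $D=\sum_{z\in Z}m_z[z]\in\DivX$, and let $\F$ have rank $r$ with $\rho\colon\piXx\to\Aut(\Fx)$ factoring through $\piXDx$; since everything is insensitive to the base point we take $\xbar$ to be a geometric generic point. It suffices to prove $\SwzF\le r\,m_z$ for each $z\in Z$, because then $\SwF=\sum_{z\in Z}\SwzF\,[z]\le rD$. Fix $z$, put $K=\kXz$, $V=\Fx$, and let $\rho_z\colon G_K\to\Aut(V)$ be the composite of $\rho$ with the homomorphism $G_K\to\piXx$ attached to a choice of embedding $\kXbar\inj\kXzbar$ over $\kX$, so that $\SwzF=\Sw(\rho_z)$. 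Here I would apply Lemma~\ref{lem:str}: the kernel of $\piXx\surj\piXDx$ is the closed normal subgroup generated by the images of the ramification subgroups $G_z^{m_z+}$, $z\in Z$, and their conjugates; in particular $\rho$ kills the image of $G_K^{m_z+}$, i.e.\ $G_K^{m_z+}\subseteq\Ker\rho_z$.

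It remains to carry out the local estimate, which is elementary. From $\Sw(\rho_z)=\sum_{\lambda>0}\lambda\,\dim\!\bigl(V^{G_K^{\lambda+}}/V^{G_K^{\lambda}}\bigr)$: for $\lambda>m_z$ one has $G_K^{\lambda}\subseteq G_K^{m_z+}\subseteq\Ker\rho_z$, so $V^{G_K^{\lambda}}=V^{G_K^{\lambda+}}=V$ and the $\lambda$-term is $0$; hence only $\lambda\le m_z$ contribute, and for such $\lambda$, $\lambda\,\dim(V^{G_K^{\lambda+}}/V^{G_K^{\lambda}})\le m_z\,\dim(V^{G_K^{\lambda+}}/V^{G_K^{\lambda}})$. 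Since $G_K^{\lambda'}\subseteq G_K^{\lambda+}$ for $\lambda'>\lambda$, the finitely many distinct subspaces among the $V^{G_K^{\lambda}}$, $V^{G_K^{\lambda+}}$ ($\lambda>0$) form an increasing chain in $V$, whence $\sum_{\lambda>0}\dim(V^{G_K^{\lambda+}}/V^{G_K^{\lambda}})\le\dim V=r$. Combining, $\Sw(\rho_z)\le r\,m_z$, as wanted.

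The step I expect to need the most care is the reduction in the first paragraph---making the compatibilities between Lemma~\ref{lem:pull-back} (the construction of $\varphi$ and of $\piCD\to\piXDx$), Lemma~\ref{lem:pullback rep} (the stalk identification), and Lemma~\ref{lem:str} (the explicit kernel) fit together, and checking that the base-point choices, in particular the move to a geometric generic point that lets one quote Lemma~\ref{lem:str}, cause no trouble. After the reduction, once one knows that $G_K^{m_z+}$ acts trivially on $V$, the bound $\Sw(\rho_z)\le r\,m_z$ is immediate from the definition of the Swan conductor.
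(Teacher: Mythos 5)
Your proposal is correct and follows essentially the same route as the paper: reduce to a smooth curve via Lem.~\ref{lem:pull-back} and Lem.~\ref{lem:pullback rep}, localize at each boundary point using Lem.~\ref{lem:str}, and conclude with the elementary bound $\Sw(V)\le\lambda\dim(V)$ for a representation killing $G_K^{\lambda+}$. Your final paragraph merely spells out the telescoping-dimension estimate that the paper leaves implicit in its concluding local lemma.
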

\begin{proof}
  For each $\phi:C\to X \in \CuX$, we show  
  $\Sw(\phiastF) \le \phibarast(rD) = r(\phibarastD)$ 
  (the last equality follows from the additivity property of $\phibarast$: 
  $\phibarast(D + D') = \phibarastD + \phibarast D'$, \Cf \cite{EGA}, Sect.~21.4.2).  
  The assumption on $\rho$ 
  does not depend on the choice of the geometric point $\xbar$ of $X$. 
  By replacing the geometric point $\xbar$ if necessary, 
  the morphism $\phi:C\to X \in \CuX$ induces a commutative diagram 
  \begin{equation}
  \label{eq:phi}
  \vcenter{
	  \xymatrix@C=15mm{
  	    \piCa\ar[r]^{\varphi} \ar@{->>}[d] & \piXx \ar@{->>}[d] \\
      	\piCDa \ar[r] & \piXDx
      }
  }
  \end{equation}
  by Lem.~\ref{lem:pull-back}, 
  where $\varphi$ is the induced homomorphism of fundamental groups from $\phi$, 
  and $\abar$ a geometric point of $C$. 
  By Lem.~\ref{lem:pullback rep}, 
  the pullback $\phiastF$ 
  corresponds to the representation given by the composition 
  $\piCa \onto{\varphi} \piXx \onto{\rho}  \Aut(\Fx)$.
  From the assumption and the diagram (\ref{eq:phi}), this representation factors 
  through $\piCDa$. 
  From this, we may assume that $X$ is a smooth curve over $k$. 
  
   Since the Swan conductor is defined to be 
  $\SwF = \sum_{z \in Z} \SwzF [z]$, 
  if we write $D = \sum_{z \in Z} m_z [z]$, 
  it is enough to show $\SwzF \le r m_z$ 
  for each $z\in Z$. 
  By replacing the geometric point and using Lem.~\ref{lem:pullback rep} again, 
  the multiplicity $\SwzF$ at $z\in Z$  
  is given by the Swan conductor of the representation 
  $G_z \to \piXx \onto{\rho}  \Aut(\Fx)$ 
  of $G_z :=\Gal(\kXzbar/\kXz)$,  
  where the first homomorphism 
  is given 
  by choosing an embedding $\kXbar \inj \kXzbar$ over $\kX$ as in Lem.\ \ref{lem:str}. 
  The assumption and Lem.~\ref{lem:str} imply that this representation factors 
  through $G_z/G_z^{m_z+}$. 
  The assertion is reduced to showing the lemma below.    
\end{proof}

\begin{lem}
  Let $K$ be a local field with residue field of characteristic $p$. 
  If a $\Qlbar$-representation
  $\rho:G_K \to \Aut(V)$ 
  annihilates $G_K^{\lambda+}$ for some $\lambda \in \R_{\ge 0}$, 
  then we have $\Sw(V) \le \dim(V) \lambda$. 
\end{lem}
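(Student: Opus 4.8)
The plan is to read the bound off directly from the definition of the Swan conductor (Definition~\ref{def:Swan}(i)) by truncating the defining sum using the hypothesis. Recall $\SwV = \sum_{\mu>0}\mu\dim(V^{G_K^{\mu+}}/V^{G_K^{\mu}})$, where the fixed subspaces are taken via $\rho$. First I would record two monotonicity facts, both immediate from the definition of $G_K^{\mu+}$ as the closure of $\bigcup_{\mu'>\mu}G_K^{\mu'}$: for reals $\mu'>\mu\ge 0$ one has $G_K^{\mu'}\subseteq G_K^{\mu+}$; and, since the stabiliser in $V$ of any vector is closed and $\rho$ is continuous, $V^{G_K^{\mu+}}=\bigcap_{\mu'>\mu}V^{G_K^{\mu'}}$. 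Consequently $V^{G_K^{\mu+}}\subseteq V^{G_K^{\mu'}}\subseteq V^{G_K^{\mu'+}}$ whenever $\mu'>\mu$, so $\mu\mapsto\dim V^{G_K^{\mu}}$ is non-decreasing.

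Next I would invoke the hypothesis $\rho(G_K^{\lambda+})=\set{1}$, which gives $V^{G_K^{\lambda+}}=V$, together with the inclusion $G_K^{\mu}\subseteq G_K^{\lambda+}$ valid for every $\mu>\lambda$; these force $V^{G_K^{\mu}}=V^{G_K^{\mu+}}=V$, so every term of the defining sum indexed by some $\mu>\lambda$ vanishes. Hence
\[
\SwV=\sum_{0<\mu\le\lambda}\mu\dim(V^{G_K^{\mu+}}/V^{G_K^{\mu}})\le\lambda\sum_{0<\mu\le\lambda}\dim(V^{G_K^{\mu+}}/V^{G_K^{\mu}}),
\]
so it suffices to bound the last sum by $\dim V$. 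Only finitely many $\mu$ contribute (a ``break''), because by the monotonicity above passing from $V^{G_K^{\mu}}$ to $V^{G_K^{\mu+}}$ at a break strictly raises the integer $\dim V^{G_K^{\mu}}\le\dim V$, and across distinct breaks these increases are nested. Listing the breaks as $\mu_1<\dots<\mu_s\le\lambda$ we obtain a chain $V^{G_K^{\mu_1}}\subseteq V^{G_K^{\mu_1+}}\subseteq V^{G_K^{\mu_2}}\subseteq\dots\subseteq V^{G_K^{\mu_s}}\subseteq V^{G_K^{\mu_s+}}\subseteq V$, and the sum $\sum_i(\dim V^{G_K^{\mu_i+}}-\dim V^{G_K^{\mu_i}})$ is at most the total dimension drop along this chain, namely $\dim V^{G_K^{\mu_s+}}-\dim V^{G_K^{\mu_1}}\le\dim V$. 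Combining the two displays yields $\SwV\le\lambda\dim V$.

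The argument is entirely formal, and I do not expect a genuine obstacle; the only points requiring any care are the identification $V^{G_K^{\mu+}}=\bigcap_{\mu'>\mu}V^{G_K^{\mu'}}$ (this is where continuity of $\rho$ on the profinite group $G_K$ is used) and the finiteness of the set of breaks (which, as noted, follows from $\dim V<\infty$ and monotonicity, and is in any case implicit in $\SwV\in\Z_{\ge0}$ from Lemma~\ref{lem:oplus}(i)). The boundary case $\lambda=0$ needs no separate discussion: then $V=V^{G_K^{0+}}$, no break occurs at any $\mu>0$, and the inequality just reads $\SwV=0$, consistent with $V$ being tamely ramified.
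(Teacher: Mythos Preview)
Your proof is correct and follows essentially the same approach as the paper: truncate the defining sum of $\Sw(V)$ to the range $0<\mu\le\lambda$ using the hypothesis, then bound each weight $\mu$ by $\lambda$ and observe that the remaining sum of dimension jumps telescopes into at most $\dim V$. The paper is terser---it packages the truncation via the fixed field $L=\Kbar^{\Ker(\rho)}$ and leaves the final telescoping bound implicit---while you spell out the monotonicity and the chain of fixed subspaces explicitly, but the underlying argument is the same.
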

\begin{proof}
  Let $L = \Kbar^{\Ker(\rho)}$ be the extension of $K$  
  corresponding to $\Ker(\rho)$. 
  From the assumption, we have $G_K^{\lambda+} \subset \Ker(\rho) = G_L$. 
  The ramification of $L/K$ is bounded by $\lambda$. 
  In particular, we have $V^{G_K^{\lambda'}} = V$ 
  for any $\lambda' > \lambda$. 
  We obtain 
  \[
    \Sw(V) = \sum_{0<\lambda' \le \lambda }\lambda' 
    \dim(V^{G_K^{\lambda'+}}/V^{G_K^{\lambda'}}) \le \lambda\dim(V). 
  \]
\end{proof}


\section{Finiteness}
\label{sec:small}

\subsection*{Smallness}
We recall a property of profinite groups 
called \textit{smallness}. 
This notion is also refereed as ``type (F)'' in \cite{SerCG}, Chap.~III, Sect.~4.1.

\begin{dfn}[\cite{HH}, Def.~2.1]
\label{def:small}
  A profinite group $G$ is said to be \textbf{small} 
  if there exist only finitely many open subgroups $H$
  with $(G:H) \le n$ for any $n\in \Z_{\ge 1}$.
\end{dfn}

For example, 
a topologically finitely generated profinite group 
is small (\cite{HH}, Prop.~2.4). 
Using this notion, 
one can interpret the Hermite-Minkowski theorem 
as follows:  
For a number field $F$ and 
a finite set $S$ of primes of  $F$, 
the Galois group $G_{S}$ of the maximal Galois extension
of  $F$ unramified outside $S$ is small.

\begin{prop}[\cite{HH}, Sect.~2] 
\label{prop:small}
Let $G$ and $G'$ be profinite groups. 
\begin{enumerate}
\item
  $G$ is small if and only if 
  there exist only finitely many open normal subgroups $N$
  with $(G:N) \le n$ for any $n\in\Z_{\ge 1}$. 

\item
  If $G$ is small and 
  $N$ is a closed normal subgroup of $G$,
  then the quotient group $G/N$ is small.
  
\item
  If $G$  and $G'$ are small, then 
  their free product $G\bigast G'$ is also small. 
\end{enumerate}
\end{prop}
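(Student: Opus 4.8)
The plan is to take (i) as a standard fact about profinite groups proved via the normal-core construction, (ii) as an immediate consequence of the definition, and (iii) by counting finite quotients using the universal property of the profinite free product. For (i), the ``only if'' direction is trivial since an open normal subgroup is open. For ``if'', I would send an open subgroup $H$ with $(G:H)\le n$ to its normal core $N_H:=\bigcap_{g\in G} gHg^{-1}$, the kernel of the (continuous) permutation action $G\to\mathrm{Sym}(G/H)$ induced by the open subgroup $H$; then $N_H$ is open normal with $(G:N_H)$ dividing $(G:H)!\le n!$. By hypothesis there are only finitely many open normal subgroups of index $\le n!$, and for each such $N$ the open subgroups $H\supseteq N$ correspond to subgroups of the finite group $G/N$, hence are finite in number. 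Since every $H$ of index $\le n$ contains its core, only finitely many such $H$ exist.

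Part (ii) is immediate: under the quotient map $G\to G/N$ the open subgroups of $G/N$ of index $\le n$ correspond bijectively to the open subgroups of $G$ containing $N$ of index $\le n$, which form a subset of the finite set of all open subgroups of $G$ of index $\le n$. For (iii), write $P=G\bigast G'$; by (i) it suffices to bound, for each $n$, the number of open normal subgroups $N\trianglelefteq P$ with $(P:N)\le n$. Each such $N$ is the kernel of the quotient map onto the finite group $P/N$ of order $\le n$, hence --- after composing with an isomorphism --- the kernel of a continuous homomorphism $P\to Q$ for some $Q$ in a fixed finite set $\mathcal{Q}$ of representatives of the isomorphism classes of finite groups of order $\le n$. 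Distinct $N$ arise from homomorphisms with distinct kernels, so the number of such $N$ is at most $\sum_{Q\in\mathcal{Q}}\#\Hom_{\mathrm{cont}}(P,Q)$. By the universal property defining the free profinite product, $\Hom_{\mathrm{cont}}(P,Q)=\Hom_{\mathrm{cont}}(G,Q)\times\Hom_{\mathrm{cont}}(G',Q)$; and a continuous homomorphism from the small group $G$ into the finite group $Q$ has open kernel of index $\le |Q|\le n$, of which there are only finitely many, each admitting only finitely many homomorphisms factoring through it --- so $\Hom_{\mathrm{cont}}(G,Q)$ is finite, and likewise for $G'$. Summing over $\mathcal{Q}$ gives the bound, and (iii) follows by (i).

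The routine parts are (i) and (ii); the substance lies in (iii). I expect the main point to be setting up the free-product step correctly: invoking $\Hom_{\mathrm{cont}}(G\bigast G',-)\cong\Hom_{\mathrm{cont}}(G,-)\times\Hom_{\mathrm{cont}}(G',-)$ on the category of finite groups, together with the elementary bookkeeping that there are only finitely many finite groups of bounded order and only finitely many continuous homomorphisms from a small profinite group into a fixed finite group.
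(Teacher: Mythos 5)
Your proof is correct, and the paper itself gives no argument for this proposition --- it is simply quoted from \cite{HH}, Sect.~2, where the standard proofs (normal core for (i), the correspondence theorem for (ii), and counting continuous homomorphisms into finite groups of bounded order via the universal property $\Hom(G\bigast G',Q)\cong\Hom(G,Q)\times\Hom(G',Q)$ for (iii)) are exactly the ones you reproduce. Nothing is missing; in particular you correctly handle the only delicate point in (iii), namely that finiteness of $\Hom_{\mathrm{cont}}(G,Q)$ for small $G$ requires both the finiteness of the possible open kernels and the finiteness of maps factoring through each finite quotient.
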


%

\subsection*{Main theorem}

Recall that 
a {smooth Weil sheaf} $\F$ on a variety $X$ over $k$   
consists of a smooth $\Qlbar$-sheaf $\ol{\F}$ on 
$X\otimes_k \kbar:= X\times_{\Spec(k)}\Spec(\kbar)$  
and an action of the Weil group $W_k=W(\kbar/k) \subset G_k$ on $\ol{\F}$ (\cite{WII}, Def.~1.1.10 (i)).  
For a smooth $\Qlbar$-sheaf $\F$ on $X$, 
the pullback of $\F$ on $X\otimes_k \kbar$ 
by the projection $X\otimes_k \kbar \to X$ 
and the action of $W_k$ which is given by the restriction of that of $G_k$  
produce a smooth Weil sheaf on $X$. 
This construction gives a fully faithful functor
\begin{equation}
\label{eq:Weil} 
\xymatrix@C=5mm{
\mbox{(smooth $\Qlbar$-sheaves on $X$)}\ \ar@{^{(}->}[r] & \mbox{(smooth Weil sheaves on $X$)}.
}
\end{equation}
The Weil sheaves in the essential image of the above functor  
are said to be \textbf{\'etale} (\cite{WII}, 1.3.2). 
In fact, 
for a general Weil sheaf $\F$ on $X$ 
and $D\in \DivX$ 
the condition ``$\Sw(\F) \le D$'' 
is defined in the same manner as Def.~\ref{def:Swan} 
using the Weil group $\WXx \subset \piXx$ 
(\cite{EK}, Sect.~3).  
For a smooth $\Qlbar$-sheaf $\F$, 
we have 
\begin{equation}
\label{eq:Sws}
\xymatrix@C=7mm{
  \Sw(\F) \le D \mbox{ in the sense of Def.~\ref{def:Swan}} 
  \ar@{<=>}[r]& \Sw(\F \mbox{ as an \'etale Weil sheaf}\, ) \le D.  
}  
\end{equation}
From this, we often identify smooth $\Qlbar$-sheaves with 
the corresponding \'etale Weil sheaves. 
Adding to \cite{WII}, 
for more details on Weil sheaves, 
see also \cite{KW}, Chap.~I, and \cite{KR}, Sect.~10. 
Following \cite{EK}, for $r\in \Z_{\ge 1}$, we denote by

\begin{itemize} 
\item $\RrX$\,: the set of smooth Weil sheaves on $X$ 
of rank $r$ up to isomorphism and up to semi-simplification. 
\end{itemize}

\begin{thm}[\cite{EK}, Thm.~2.1]
\label{thm:Deligne}
Let $X$ be a smooth variety over $k$, and 
$\Xbar$ a normal compactification of $X$ such that 
$Z = \Xbar \ssm X$ is the support of an effective Cartier divisor on $\Xbar$.  
Then for any $r, n \in\Z_{\ge 1}$ and $D \in \DivX$, 
the set of irreducible sheaves $\F \in \RrX$   
with 

\begin{itemize}
\item $\SwF \le D$, and 
\item $\det(\F)^{\otimes n} = 1$ 
\end{itemize}
is finite, 
where $\det(\F) := \bigwedge^r \F$ is the determinant of $\F$. 
\end{thm}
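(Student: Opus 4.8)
\noindent
The plan is to reduce the statement to the case where $X$ is a smooth \emph{curve} over $k$, and to settle that case by combining L.~Lafforgue's solution of the Langlands correspondence for $\GL_r$ over function fields with the finiteness of spaces of automorphic forms of bounded level. Enlarging $D$ we may assume $|D| = Z$, and replacing $\F$ by an irreducible constituent of its semisimplification it suffices to bound the number of irreducible $\F \in \RrX$ with $\SwF \le D$ and $\det(\F)^{\otimes n} = 1$; in particular $\det(\F)$ then has finite order dividing $n$.

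\textbf{The curve case.} Let $X$ be a smooth curve with smooth compactification $\Xbar$ and $Z = \Xbar\ssm X$. Lafforgue's theorem attaches to an irreducible smooth Weil $\Qlbar$-sheaf $\F$ of rank $r$ on $X$ a cuspidal automorphic representation $\Pi$ of $\GL_r(\mathbb{A}_{\kX})$, matching Frobenius eigenvalues with Satake parameters, the Artin conductor $\Ar(\F)$ with the conductor of $\Pi$, and $\det(\F)$ with the central character $\omega_{\Pi}$; this assignment is injective on isomorphism classes. By the relation $\Ar(\F) = \epsilon(\F) + \SwF$ recalled in Remark~\ref{rem:Ar}, with $\epsilon(\F)$ bounded in terms of $r$ and $Z$, the conductor of $\Pi$ is bounded by an effective divisor supported on $Z$ depending only on $D$ and $r$, hence ranges over a finite set of divisors; and $\omega_{\Pi}$ has order dividing $n$ and conductor dividing that divisor, so $\omega_{\Pi}$ lies in a finite set (the $n$-torsion of a finite ray class group of $\kX$). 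For each admissible level $\mathfrak{n}$ and central character $\omega$, the space of cusp forms on $\GL_r(\mathbb{A}_{\kX})$ of level $\mathfrak{n}$ with central character $\omega$ is finite-dimensional by reduction theory (the cuspidal spectrum is discrete and cusp forms are supported, modulo the center, on a quotient of finite volume), so only finitely many $\Pi$, and therefore only finitely many $\F$, can occur.

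\textbf{Reduction to curves.} For general $X$ the key input is Deligne's theorem that a semisimple smooth $\Qlbar$-sheaf on a normal variety over $k$ is determined, up to isomorphism, by the family of its pullbacks $\phiastF$ along the curves $\phi\colon C \to X$ in $\CuX$; this rests on the fact that, modulo the arithmetic part coming from $\Spec(k)$, the group $\piXx$ is topologically generated by the images of the fundamental groups of smooth curves through two fixed general closed points $x_0,x_1 \in X$. Fixing a projective embedding, the smooth curves $C \subset X$ through $x_0,x_1$ of bounded degree form a bounded family; for an irreducible $\F$ with $\SwF \le D$, a sufficiently general such $C$ has $\phiastF$ irreducible with $\Sw(\phiastF) \le \phibarastD$ (Def.~\ref{def:Swan}(iii)) of degree bounded uniformly over the family. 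Granting the curve case, the number of possibilities for $\phiastF$ is then uniformly bounded, and a constructibility argument for the moduli of the resulting monodromy representations shows that $\F$ is pinned down by the family of its restrictions; this yields the finiteness over $X$.

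\textbf{Main obstacle.} With Lafforgue's correspondence and the finiteness of automorphic forms of bounded level taken as inputs, the delicate step is precisely the reduction to curves: one needs Deligne's ``detection by curves'' together with a \emph{uniform} boundedness statement --- a constructibility result for the parameter variety of the relevant curves and for the possible monodromy along them --- guaranteeing that finiteness propagates from a bounded family of curves to $X$ itself. This uniform statement, rather than either of the two cited deep theorems, is where the argument of \cite{EK} (after Deligne) does its real work, and is the part I would expect to require the most care.
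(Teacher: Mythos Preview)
The paper does not prove this theorem at all: it is stated with a citation to \cite{EK}, Thm.~2.1, and then invoked as a black box in the proof of Thm.~\ref{thm:main}. There is therefore nothing in the paper to compare your attempt against.

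For what it is worth, your outline---settling the curve case via Lafforgue's Langlands correspondence together with the finite-dimensionality of cusp forms of bounded level and fixed central character, then reducing the general case to curves by a uniform ``detection by curves'' argument---is indeed the shape of the proof in \cite{EK} (following Deligne), and you have correctly located the genuinely delicate step in the reduction rather than in either of the two deep inputs. If you intend this as more than a sketch, the points that would need care are: making precise why $\det(\F)^{\otimes n}=1$ forces the central character to lie in a finite set (one must control both the order and the conductor), and, in the reduction step, the actual constructibility argument that turns finiteness on a bounded family of curves into finiteness on $X$.
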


From the Galois correspondence within the Galois category $\Cov(X,D)$ 
for a variety $X$ over $k$, 
Thm.~\ref{thm:intro} is equivalent to the smallness of 
the fundamental group $\piXDx$ which is stated in Thm.~\ref{thm:main} below. 
For another geometric point $\xpbar$ of $X$,  
we have an isomorphism $\pi_1(X,D, \xpbar) \isomto  \piXDx$ 
and two such isomorphisms differ by an inner automorphism of 
$\piXDx$ 
(\cite{Fu}, Prop.~3.2.13). 
Since we are interested in 
the smallness, 
we omit the base point $\xbar$ of $X$ 
from the fundamental groups and write $\piXD$ as well as $\piX$. 
Under this convention, 
it must be noted that a \emph{canonical}\/ homomorphism
or a \emph{commutative}\/ diagram 
concerning these groups 
makes sense viewing it up to conjugates.

\begin{thm}
\label{thm:main}
  Let $X$ be a variety over a finite field $k$ of characteristic $p$,  
  $\Xbar$ a compactification of $X$, 
  and $Z = \Xbar \ssm X$.  
  Then $\piXD$ is small for any $D\in \DivX$. 
\end{thm}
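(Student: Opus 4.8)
The plan is to translate the statement into a counting of \'etale coverings, reduce to the smooth situation so that Deligne's theorem (Thm.~\ref{thm:Deligne}) becomes available, and then encode each bounded covering as a smooth $\Qlbar$-sheaf of controlled rank, Swan conductor, and determinant, so that the finiteness of such sheaves forces the finiteness of the coverings. By Prop.~\ref{prop:small}~(i) it suffices to prove that for every $n\in\Z_{\ge1}$ there are only finitely many open normal subgroups of $\piXD$ of index $\le n$; by the Galois correspondence in $\Cov(X\subset\Xbar,D)$ this amounts to: only finitely many connected Galois \'etale coverings $X'\to X$ of degree $\le n$ with ramification bounded by $D$. I fix such an $n$ throughout.

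First I would reduce to the hypotheses of Thm.~\ref{thm:Deligne}: $X$ smooth over $k$ (so in particular integral, since $X$ is connected) and $\Xbar$ a normal compactification with $Z=\Xbar\ssm X$ the support of an effective Cartier divisor. Passing from $X$ to $X_{\red}$ changes neither $\piXx$ (topological invariance of the \'etale site) nor $\CuX$, hence neither $\Cov(X\subset\Xbar,D)$ nor $\piXD$. Next, working over a smooth dense open $U$ and over a normal compactification $\overline U$ of $U$ dominating $\Xbar$ which, after blow-ups, has $Z_U:=\overline U\ssm U$ equal to the support of an effective Cartier divisor (the inverse image of $D$ existing by Lem.~\ref{lem:divisor}), Lem.~\ref{lem:pull-back} combined with the surjectivities in \eqref{eq:pi} gives, for a suitable $D_U\in\Div_{Z_U}(\overline U)$ with $|D_U|=Z_U$, a surjection $\pi_1(U,D_U)\surj\piXD$; by Prop.~\ref{prop:small}~(ii) it then suffices to treat $(U,\overline U,D_U)$. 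I expect \emph{this reduction} — arranging the boundary to be a Cartier-divisor support, transporting $D$, and retaining a surjection onto $\piXD$ (together with the harmless but not entirely trivial passage to an integral, then smooth, scheme) — to be the main technical obstacle, precisely because it is the geometric setup for which Deligne's theorem is formulated.

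So assume from now on that $X$ and $\Xbar$ are as in Thm.~\ref{thm:Deligne}. Given a connected Galois covering $f\colon X'\to X$ in $\Cov(X\subset\Xbar,D)$ with group $G:=\Gal(X'/X)$, $|G|\le n$, let $\rho_G\colon G\to\GL(\Qlbar[G])$ be the left regular representation; it is faithful, and since $\Char(\Qlbar)=0$ the $G$-module $\Qlbar[G]$ is semisimple (Maschke). Let $\F_{X'}$ be the smooth $\Qlbar$-sheaf on $X$ corresponding under \eqref{eq:l-adic}, \eqref{eq:Weil} to the composite $\piXx\surj\piXDx\surj G\onto{\rho_G}\GL(\Qlbar[G])$. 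Then $\F_{X'}$ is semisimple of rank $|G|\le n$, and its associated representation factors through $\piXDx$, so Lem.~\ref{lem:bdd} gives $\Sw(\F_{X'})\le|G|\,D\le nD$; moreover $\det(\F_{X'})$ is a character of the finite group $G^{\ab}$, hence $\det(\F_{X'})^{\otimes n!}=1$. The crucial observation is that $X'\mapsto[\F_{X'}]$ is \emph{injective} on isomorphism classes: an isomorphism $\F_{X'}\cong\F_{X''}$ makes the two representations of $\piXx$ conjugate, so they share the same kernel; since $\rho_G$ is faithful this kernel is $\pi_1(X')$ resp.\ $\pi_1(X'')$, whence $X'\cong X''$ over $X$.

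Finally I would write $\F_{X'}=\bigoplus_i\F_i$ as a sum of irreducible smooth Weil sheaves. Each $\F_i$ has rank $r_i\le n$, has finite monodromy of order dividing $|G|$ (so $\det(\F_i)^{\otimes n!}=1$), and satisfies $\Sw(\F_i)\le\Sw(\F_{X'})\le nD$ by additivity of the Swan conductor (Lem.~\ref{lem:oplus}~(ii)). Applying Thm.~\ref{thm:Deligne} with the parameters $r=r_i$ (ranging over $r\in\{1,\dots,n\}$), the integer $n!$, and the divisor $nD\in\DivX$, there are only finitely many such irreducible $\F\in\Rr(X)$; hence only finitely many possibilities for the $\F_i$, and since $\sum_i r_i=\operatorname{rank}\F_{X'}\le n$, only finitely many isomorphism classes of $\F_{X'}$. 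By the injectivity above, only finitely many connected Galois coverings $X'\to X$ of degree $\le n$ lie in $\Cov(X\subset\Xbar,D)$. As $n$ was arbitrary, Prop.~\ref{prop:small}~(i) shows that $\piXD$ is small.
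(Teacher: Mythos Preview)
Your proposal is correct and follows essentially the same approach as the paper: reduce to the smooth situation with a normal compactification whose boundary is Cartier-divisorial (you sketch this reduction and rightly flag it as the main technical work; the paper carries it out via a careful chain reduced $\to$ integral $\to$ normal $\to$ smooth $\to$ blow up $\to$ normalize the compactification, using descent and free products for the non-irreducible step), then encode each Galois covering by its regular representation, bound the Swan conductor by Lem.~\ref{lem:bdd} and the determinant order by the group order, and conclude by Deligne's finiteness. The only cosmetic differences are that the paper fixes the index exactly $r$ (so $\det^{\otimes r}=1$) where you allow index $\le n$ and use $\det^{\otimes n!}=1$, and that you spell out the injectivity $X'\mapsto\F_{X'}$ a bit more explicitly than the paper's one-line ``$\#\Sr=\#\{\F_N\}$''.
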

\begin{proof}
First, we reduce the assertion to the situation in which 
Thm.~\ref{thm:Deligne} holds, that is,  
$X$ is smooth over $\Spec(k)$ and the compactification $\Xbar$ is normal 
with the boundary $Z = \Xbar\ssm X$ 
which is the support of an effective Cartier divisor on 
$\Xbar$. 

\sn
\textbf{(Reduction to reduced)}
Let $\Xbarred$ and  $\Xred$ be the reduced closed 
subschemes associated to $\Xbar$ and $X$ respectively. 
The natural morphisms 
$f:\Xred \inj X$ and $\fbar:\Xbarred \inj \Xbar$ 
which are closed immersions 
give the following commutative diagram 
\[
	\xymatrix@C=15mm{
	\Xbarred\ar@{^{(}->}[r]^{\fbar} & \Xbar \\
	\Xred \ar@{^{(}->}[u]^{j_{\red}}\ar@{^{(}->}[r]^f & X, \ar@{^{(}->}[u]_j 
	}
\]
where $j_{\red}$ is the morphism associated to 
the inclusion map $j$ 
(\cite{EGA}, Chap.~I, 5.1.5). 
Since we have $|\Xbarred| = |\Xbar|$ and $|\Xred| =|X|$ 
as the underlying topological spaces, 
the induced morphism $j_{\red}$ is a dominant open immersion 
(\cite{EGA}, Chap.~I, Prop.~5.1.6). 
Here, $\Xbarred$ is reduced, 
and for the generic point $\xi$ of an irreducible component of $\Xbarred$, 
we have $\fbar(\xi) \not \in Z = \Xbar\ssm X$.
In particular, $\fbar(\xi) \not \in |D|$. 
The inverse image $\fbarastD \in \Div_{\Xbarred \ssm \Xred}(\Xbarred)$ 
is defined (the case (c) in Lem.~\ref{lem:divisor}). 
By Lem.~\ref{lem:pull-back}, there is a commutative diagram 
\begin{equation}
\label{eq:pired}
\vcenter{
	\xymatrix@C=15mm{
	\pi_1(X_{\red})\ar@{->>}[d]\ar[r] & \piX\ar@{->>}[d] \\
	\pi_1(\Xred, \fbarastD) \ar[r] & \piXD, 
	}
}
\end{equation}
where the vertical homomorphisms are surjective 
as we noted in (\ref{eq:pi}).  
Since the top horizontal homomorphism in (\ref{eq:pired}) 
is known to be bijective (\cite{SGA1}, Exp.~IX, Prop.~1.7), 
the bottom is surjective. 
If we assume that $\pi_1(\Xred,\fbarastD)$ is small,  
then so is $\piXD$ by Prop.~\ref{prop:small} (ii).  
Therefore, we may assume that $X$ and $\Xbar$ are reduced. 

\sn
\textbf{(Reduction to integral)}
Since $\Xbar$ is Noetherian, it has only a finite number of 
irreducible components $\Xbar_1,\ldots ,\Xbar_n$. 
The irreducible components of $X$ are given by 
$X_i := \Xbar_i\cap X$ for $i=1,\ldots ,n$ (\cite{Liu}, Prop.~2.4.5(b)). 
We endow these components with the reduced closed subscheme structure.
For each $i$, we obtain the following commutative diagram 
\[
	\xymatrix@C=15mm{
	\Xbar_i\ar@{^{(}->}[r]^{\fibar} & \Xbar \\
	X_i \ar@{^{(}->}[u]\ar@{^{(}->}[r]^{f_i} & X,\ar@{^{(}->}[u]
	}
\]
where $\fibar$ is the natural morphism 
and $f_i$ is induced by $\fibar$. 
The inverse image 
$D_i := \fibarast D \in \Div_{\Xbar_i\ssm X_i}(\Xbar_i)$ exists. 
In fact, for the generic point $\xi_i \in \Xbar_i$, we have $\fibar(\xi_i) 
\not\in |D|$ (the case (c) in Lem.~\ref{lem:divisor}). 
The canonical homomorphism $\pi_1(X_i,D_i) \to \piXD$ exists for each $i$ by Lem.~\ref{lem:pull-back}.
The collection of morphisms $\set{f_i}_{1\le i \le n}$ induces 
$\bigsqcup_{i=1}^n X_i \to X$ which is an 
effective descent morphism (\cite{SGA1}, Exp.~IX, see also \cite{S}, Thm.~5.2). 
The descent theory (\cite{SGA1}, Exp.~IX, Thm.~5.1, see also \cite{S}, Cor.~5.3) 
says that there exist finitely many generators 
$\gamma_1,\ldots , \gamma_m$ 
such that 
we have a surjective homomorphism 
\[
\xymatrix@R=5mm@C=5mm{
 \displaystyle{\Conv_{i=1}^n \pi_1(X_i) \Asterisk \braket{\gamma_1,\ldots ,\gamma_m}}   
 \ar@{->>}[r] & \piX, 
}
\]
where 
$\braket{\gamma_1,\ldots, \gamma_m}$ is the profinite completion of the free group 
on the set $\set{\gamma_1,\ldots,\gamma_m}$. 
The fundamental groups 
$\pi_1(X_i,D_i)$ and $\piXD$ are 
quotients of $\pi_1(X_i)$ and $\piX$ respectively as in (\ref{eq:pi}). 
The canonical homomorphisms $\pi_1(X_i,D_i) \to \piXD$ 
and the composition $\braket{\gamma_1,\ldots ,\gamma_m} \to \piX \surj \piXD$ 
give a commutative diagram 
\begin{equation}
\label{eq:piint}
\vcenter{
\xymatrix@R=5mm@C=15mm{
 \displaystyle{\Conv_{i=1}^n \pi_1(X_i) \bigast \braket{\gamma_1,\ldots ,\gamma_m}}\ar@{->>}[d]   
 \ar@{->>}[r] & \piX \ar@{->>}[d] \\
  \displaystyle{\Conv_{i=1}^n \pi_1(X_i,D_i) \bigast  \braket{\gamma_1,\ldots ,\gamma_m}}   
  \ar[r] & \piXD.
 }
}
\end{equation}
From the diagram (\ref{eq:piint}), 
the bottom horizontal map is surjective.  
As the free profinite group 
$\braket{\gamma_1,\ldots,\gamma_m}$ is topologically finitely generated, 
it is small (\cite{HH}, Prop.~2.4).  
If we assume $\pi_1(X_i, D_i)$ is small for all $i$, 
the free product 
$\bigast_{i=1}^n \pi_1(X_i,D_i) \bigast  \braket{\gamma_1,\ldots ,\gamma_m}$
is small by Prop.~\ref{prop:small} (iii) 
and the same holds for $\piXD$ by Prop.~\ref{prop:small} (ii). 
Thus, we may assume that $X$ and $\Xbar$ are integral. 

\sn
\textbf{(Reduction to normal)}
Let $\fbar:\Xbar' \to \Xbar$ be the normalization of $\Xbar$. 
The morphism $\fbar$ is finite (\cite{Liu}, Cor.~4.1.30) 
and hence proper. 
The restriction  
$f:X' := \fbar^{\,-1}(X) \to X$ of $\fbar$ 
gives 
\[
\xymatrix@C=15mm{
   \Xpbar \ar[r]^{\fbar} & \Xbar \\
   X' \ar@{^{(}->}[u]\ar[r]^f & X \ar@{^{(}->}[u]
}
\]
which is commutative. 
The morphism $f$ is also the normalization of $X$ 
(from the very definition of the normalization, \cite{Liu}, Cor.~4.1.19).  
By the descent theory again  (\cite{SGA1}, Exp.~IX, Thm.~5.1, see also \cite{S}, Cor.~5.3), 
$\piX$ is a quotient of the free product of $\pi_1(X')$ and 
a free profinite group $\braket{\gamma_1,\ldots,\gamma_m}$ as  
\[
\xymatrix@R=5mm@C=5mm{
 \displaystyle{\pi_1(X') \Asterisk \braket{\gamma_1,\ldots , \gamma_m}}   
 \ar@{->>}[r] & \piX. 
}
\]
The inverse image $D' := \fbarast(D)$ on $\Xpbar$ 
exists by Lem.~\ref{lem:divisor} 
(the case (b)). 
From Lem.~\ref{lem:pull-back} we have the following commutative diagram:  
\begin{equation}
\label{eq:pinormal}
\vcenter{
\xymatrix@R=5mm@C=15mm{
 \displaystyle{\pi_1(X') \bigast \braket{\gamma_1,\ldots , \gamma_m}}\ar@{->>}[d]   
 \ar@{->>}[r] & \piX \ar@{->>}[d] \\
  \displaystyle{\pi_1(X',D') \bigast\braket{\gamma_1,\ldots , \gamma_m}}   
   \ar[r] & \piXD.
 }
 }
\end{equation} 
The bottom homomorphism in the diagram (\ref{eq:pinormal}) becomes surjective. 
By Prop.~\ref{prop:small} (ii) and (iii)  
as above (the reduction to integral), 
the assertion is reduced to the situation where 
$X$ and $\Xbar$ are normal. 

\sn
\textbf{(Reduction to smooth)}
Take the smooth locus $U$ of $X$. 
This $U$ forms an open subscheme of $X$ (\cite{Liu}, Prop.~8.2.40).  
Since $X$ and hence $U$ are normal, 
$\piX$ and $\piU$ are quotient of the absolute Galois group 
$G_{\kX}$ and $G_{\kU}$ respectively \eqref{eq:normal}. 
As these quotient maps are canonical (up to inner automorphisms), 
we have the following commutative diagram: 
\[ 
\xymatrix@C=28mm{
  G_{k(U)} \ar@{->>}[d]\ar@{=}[r] &  G_{\kX}\ar@{->>}[d]\\
  \pi_1(U)\ar[r] & \piX.
}
\]
By the commutativity, the bottom horizontal map is surjective. 
From Lem.~\ref{lem:pull-back}, 
there is a commutative diagram 
\[ 
\vcenter{
\xymatrix@C=15mm{
  \pi_1(U) \ar@{->>}[r]\ar@{->>}[d] & \piX \ar@{->>}[d]\\
  \pi_1(U\subset \Xbar,D)\ar[r] & \pi_1(X\subset \Xbar,D).
}
}
\]
Since the top horizontal homomorphism 
$\pi_1(U) \to \piX$ in the diagram above is surjective,  
so is the bottom. 
From Lem.~\ref{prop:small} (ii), 
we may assume that $X$ is smooth over $k$. 

\sn
\textbf{(Reduction to $Z = |E|$ for some $E\in \DivX$)}
Let $\pbar:\Xbar' \to \Xbar$ be the blowing up of $\Xbar$ along 
the reduced closed subscheme $Z = \Xbar \ssm X \inj \Xbar$. 
Note that $\Xbar'$ is integral (\cite{EGA}, Chap.~II, Prop.~8.1.4) 
(but may not be normal), 
$\pbar$ is proper and 
induces an isomorphism $X' := \pbar^{-1}(X) \isomto X$ (\cite{Liu}, Prop.~8.1.12 (b) and (d)) 
which makes the following diagram commutative: 
\[
	\xymatrix@C=15mm{
	\Xbar'\ar[r]^{\pbar} & \Xbar \\
	X' \ar@{^{(}->}[u]\ar[r]^-{\simeq} & X.\ar@{^{(}->}[u]
	}
\]
Hence, we have a dominant open immersion $X' \inj \Xbar'$ 
whose complement $\Xbar' \ssm X'$ is 
the support of an effective Cartier divisor on $X'$ 
(\cite{Liu}, Prop. 8.1.12 (e)).
The inverse image $D' := \pbar^{\ast}D$ exists 
(the case (d) in Lem.~\ref{lem:divisor}).  
Lem.~\ref{lem:pull-back} 
gives a commutative diagram 
\begin{equation}
	\label{eq:blowup}
\vcenter{
\xymatrix@C=15mm{
  \pi_1(X')\ar@{->>}[d] \ar[r]^{\simeq}   & \piX\ar@{->>}[d]\\
  \pi_1(X'\subset \Xpbar,D') \ar[r] & \pi_1(X\subset \Xbar, D). 
} }
\end{equation}
From this diagram, 
the bottom horizontal map 
$\pi_1(X' \subset \Xbar',D' ) \surj \pi_1(X\subset \Xbar ,D)$ 
is surjective.
Using Prop.~\ref{prop:small} (ii), we may assume that 
$Z = \Xbar \ssm X$ is the support of 
an effective Cartier divisor $E$ on $\Xbar$.

\sn
\textbf{(Reduction to a normal compactification)}
Let $\fbar:\Xbar' \to \Xbar$ be the normalization.  
This morphism $\fbar$ is finite and 
induces an isomorphism $X':= {\fbar}^{\,-1}(X) \isomto X$ 
as $X$ is normal. 
From Lem.~\ref{lem:pull-back}, the diagram 
\[
\xymatrix@C=15mm{
   \Xpbar \ar[r]^{\fbar} & \Xbar \\
   X' \ar@{^{(}->}[u]\ar[r]^{\simeq} & X \ar@{^{(}->}[u]
}
\]
induces a commutative diagram 
\begin{equation}
	\label{eq:normal_comp}
	\vcenter{
\xymatrix@C=15mm{
  \pi_1(X')\ar@{->>}[d] \ar[r]^{\simeq}   & \piX\ar@{->>}[d]\\
  \pi_1(X'\subset \Xpbar,D') \ar[r] & \pi_1(X\subset \Xbar, D), 
} }
\end{equation}
where $D' = {\fbar}^{\ast}(D)$. 
In particular, we obtain a surjective homomorphism 
$\pi_1(X'\subset\Xbar',D' ) \surj  \pi_1(X\subset \Xbar,D)$.
Accordingly, there exists an open immersion $X' \inj \Xbar'$ 
with $|\Xbar'\ssm X'| = |\fbarast E|$.
Therefore, without loss of generality, 
we may assume that $X$ is smooth and 
$\Xbar$ is a 
normal compactification of $X$ whose boundary 
$Z = \Xbar\ssm X$ is the support of an effective Cartier divisor on $\Xbar$. 

Next, we show the smallness of the fundamental group $\piXD$ 
under these assumptions. 

\sn
\textbf{(Proof of the smallness)} 
  For $r\in \Z_{\ge 1}$, we denote by $\Sr$  
  the set of  open \emph{normal} subgroups $N$ of $\piXD$ with 
  $(\piXD:N) =r$. 
  By Prop.~\ref{prop:small} (i), 
  it is enough to show $\#\Sr < \infty$. 
  For each $N\in \Sr$, 
  consider the regular (and semi-simple) representation 
  $\rhobar_N:\piXD/N \inj \GL_r(\Qlbar)$
  of the finite group $\piXD/N$ (with $l\neq p$).  
  By composing the natural homomorphisms (\Cf (\ref{eq:pi})), 
 $\rhobar_N$ induces 
  \[
  \xymatrix@C=5mm{
 \rho_N:\piX \ar@{->>}[r] & \piXD \ar@{->>}[r] & \piXD/N\, \ar@{^{(}->}[r]^-{\rhobar_N} & \GL_r(\Qlbar) .
  }
  \]
  We denote by $\F_N \in \RrX$ 
  the \'etale Weil sheaf on $X$ associated with $\rho_N$ (\Cf (\ref{eq:l-adic}) and (\ref{eq:Weil})).
  By considering the irreducible decomposition, 
  $\F_N$ consists of irreducible components $\F$ of rank $\le r$ 
  satisfying 
  
  \begin{itemize}
  \item $\Sw(\F) \le \Sw(\F_N) \le rD$ \quad (by Lem.~\ref{lem:oplus} (ii), Lem.~\ref{lem:bdd} and (\ref{eq:Sws})), and 
  
  \item $\det(\F)^{\otimes r} = \det(\F_N)^{\otimes r} = 1$ \quad 
  (for  
  $(\piXD:N) = r$, and hence $\det(\rho_N)^{\otimes r} = 1$).
  \end{itemize}
  
  \noindent
  Thm.~\ref{thm:Deligne} implies that 
  there exist only finitely many such irreducible sheaves. 
  For we have $\#\Sr = \#\set{\F_N}_{N\in \Sr}$,    
  the assertion $\#\Sr < \infty$ follows.
\end{proof}

\subsection*{Applications}
Let $F$ be an algebraically closed field, 
$X$ a  variety over $k$, and 
$D\in \DivX$. 
As in Sect.~4 of \cite{HH}, in the following, 
we derive some finiteness results of representations 
$\piXD \to\GL_r(F)$ 
from our main theorem (Thm.~\ref{thm:main}) 
(so that we still omit the base point).  
Here, we endow $\GL_r(F)$ with the discrete topology.  
We define 
\begin{equation}
	\label{eq:pi0}
\xymatrix@C=5mm{
  \piXD^0 := \Ker(\varphi:\piXD \ar[r]& \pi_1(\Spec(k))= G_k),
}
\end{equation}
where the homomorphism $\varphi$ is induced from the structure morphism $X\to \Spec(k)$. 
Since $G_k (\simeq \Zhat)$ is abelian, $\piXD^0$ does not depend on the choice of 
the (omitted) geometric point.

\begin{dfn}
A representation $\rho: \piXD\to\GL_r(F)$ is said to be \textbf{geometric}
if it satisfies $\rho(\piXD) = \rho(\piXD^0)$. 
\end{dfn}
When $X$ is normal, 
this is equivalent to 
that the corresponding extension of the function field $k(X)$  
contains no constant field extension. 

\begin{cor}
\label{cor:finite}
Let $X$ be a normal variety over $k$,  
$\Xbar$ a compactification of $X$, and 
$Z = \Xbar \ssm X$. 
For $r\in \Z_{\ge 1}$ and $D\in \DivX$, we have the following: 

\begin{enumerate}
\item
There exist only finitely many isomorphism classes of semi-simple geometric 
representations 
$\piXD \to \GL_r(F)$ with solvable image. 
 
\item
If the characteristic of $F$ is $0$, then 
there exist only finitely many isomorphism classes of semi-simple geometric 
representations 
$\piXD \to \GL_r(F)$. 
\end{enumerate}
\end{cor}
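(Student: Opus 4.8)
The plan is to deduce both finiteness statements from the smallness of $\piXD$ established in Thm.~\ref{thm:main}, by reducing arbitrary representations over an algebraically closed field to representations with \emph{finite} image, and then counting those. First I would dispose of (i). A semi-simple representation $\rho:\piXD\to\GL_r(F)$ with solvable image factors through a solvable quotient; since $\piXD$ is small, it is in particular topologically finitely generated after passing to any open subgroup of bounded index, but more to the point, by Prop.~\ref{prop:small}~(i) there are only finitely many open normal subgroups of each index $\le n$. The key observation is that a solvable subgroup of $\GL_r(F)$ has bounded derived length and, after conjugation, lies in a bounded-index normalizer of a diagonalizable subgroup (Lie--Kolchin type structure), so the image of a geometric such $\rho$ is a solvable group whose order divides a bound depending only on $r$; hence $\rho$ factors through one of finitely many finite quotients $\piXD/N$ with $(\piXD:N)$ bounded, and for each such finite group there are only finitely many semi-simple representations of rank $r$ up to isomorphism. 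I expect the main technical point here to be pinning down precisely why the image is \emph{finite} (not merely solvable of bounded length): one uses that $\piXD^0$ has $\widehat{\Z}$ as a quotient only through the $G_k$-part which is killed by the geometric hypothesis, so the abelianizations appearing in the derived series are finite — this should follow from the curve case Lem.~\ref{lem:str} together with the structure of $\piXbar$, or more cheaply by invoking smallness of $\piXD$ directly to say any open subgroup has finite abelianization.

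For (ii), with $\Char F = 0$, I would first reduce to $F = \Qlbar$ (or $\C$): a semi-simple representation $\piXD\to\GL_r(F)$ has image a finitely generated group (continuous image of a topologically finitely generated profinite group, using smallness), so its matrix entries generate a finitely generated field of characteristic $0$, which embeds into $\C$ and hence, choosing a prime $l\ne p$ and an abstract embedding, into $\Qlbar$. Then I would invoke the equivalence \eqref{eq:l-adic} together with the Swan-conductor bound Lem.~\ref{lem:bdd}: a geometric semi-simple $\rho$ corresponds to a smooth $\Qlbar$-sheaf $\F$ on $X$ with $\Sw(\F)\le rD$, and ``geometric'' forces a twist making $\det(\F)$ of finite order — more precisely, semisimplifying and decomposing into irreducibles, each irreducible constituent has rank $\le r$, Swan conductor $\le rD$, and determinant of finite order bounded in terms of $r$, so Thm.~\ref{thm:Deligne} applies constituent-by-constituent. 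The finiteness of the set of $\F$ gives finiteness of the isomorphism classes of $\rho$. The main obstacle in this part is the passage from ``geometric'' to a genuine determinant-finite-order condition usable in Thm.~\ref{thm:Deligne}: one must check that a geometric representation, after possibly twisting by a character of $G_k$ (which does not change the isomorphism class over an algebraically closed field in the relevant sense), has determinant killed by some $n$ depending only on $r$, using that $\det\rho$ is geometric of rank $1$ hence has finite image by part (i) applied in rank $1$.

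The order of steps I would follow is: (1) reduce the coefficient field to $\Qlbar$ via finite generation of the image, itself a consequence of smallness; (2) prove (i) by the Lie--Kolchin structure of solvable linear groups plus finiteness of abelianizations of open subgroups, yielding finitely many finite quotients through which $\rho$ factors; (3) deduce from the rank-$1$ case of (i) that $\det\rho$ has order bounded by a function of $r$; (4) translate a geometric semi-simple $\rho$ of rank $r$ into an \'etale Weil sheaf $\F$ with $\Sw(\F)\le rD$ and $\det(\F)^{\otimes n}=1$ via \eqref{eq:l-adic}, \eqref{eq:Weil}, \eqref{eq:Sws} and Lem.~\ref{lem:bdd}; (5) decompose into irreducibles and apply Thm.~\ref{thm:Deligne} to each constituent; (6) conclude finiteness of $\{\F\}$, hence of the $\rho$. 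The single hardest point, I expect, is making rigorous that ``geometric'' really does bound the relevant determinants and abelianizations — everything else is an assembly of the machinery already set up in the paper.
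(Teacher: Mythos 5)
There is a genuine gap, and it sits exactly at the point you yourself flagged as the ``single hardest point.'' Both parts of your argument ultimately rest on the claim that the relevant abelianizations are finite: in (i) you need the successive derived quotients of $\piXD^0$ to be finite, and in (ii) you need the order of $\det\rho$ for geometric $\rho$ to be bounded independently of $\rho$. Neither of your proposed justifications delivers this. Smallness of $\piXD$ does \emph{not} imply that open subgroups have finite abelianization: $\Zhat$ is topologically finitely generated, hence small, and is its own infinite abelianization; indeed every open subgroup of $\piXD$ surjects onto $\Zhat$ via the structure map, so even the ``cheap'' version of your claim fails for $\piXD$ itself. What is actually needed is the finiteness of the \emph{geometric} part $\piXDabg=\Ker(\piXDab\to G_k)$, together with its analogues for the coverings arising along the derived series, and this is not a formal consequence of smallness or of Lem.~\ref{lem:str}: it is the content of Lem.~\ref{lem:piab}, whose proof is a substantive piece of class field theory --- one embeds $(\piXDab)^{\vee}$ into $\filDpHX$ for a suitable $D'$ using the relation $\Ar=\epsilon+\Sw$, and then invokes the Kerz--Saito finiteness theorem for the abelian fundamental group with modulus of the normal proper $\Xbar$ (\cite{KS}, Cor.~1.2). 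Without this input, your assertion that the image of a geometric solvable representation ``has order dividing a bound depending only on $r$'' is false already for $r=1$: a cyclic image in $\GL_1(F)$ can have any order prime to $\Char F$. The correct bound depends on $X$, $\Xbar$ and $D$ through $\#\piXDabg$.

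Modulo that missing ingredient, your part (i) follows essentially the paper's route (the argument of Thm.~4.5 of \cite{HH}: a Zassenhaus-type bound on the derived length of solvable subgroups of $\GL_r(F)$, plus induction on the derived series of $\piXD^0$). Your part (ii) is genuinely different: the paper runs the Jordan-theorem argument of Thm.~4.6 of \cite{HH}, whereas you transfer to $\Qlbar$ and apply Thm.~\ref{thm:Deligne} constituent by constituent. That alternative is viable and pleasantly parallel to the proof of Thm.~\ref{thm:main}, with two repairs: the image of $\rho$ is automatically finite because $\GL_r(F)$ carries the discrete topology (no appeal to finite generation or smallness is needed, and smallness does not imply topological finite generation anyway), and one must first reduce to the smooth situation with normal compactification in which Thm.~\ref{thm:Deligne} applies. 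But the condition $\det(\F)^{\otimes n}=1$ for a uniform $n$ again comes down to the finiteness of $\piXDabg$, so this route does not circumvent Lem.~\ref{lem:piab} either.
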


To show this corollary, we prepare some notation and a lemma on a finiteness of the abelian fundamental group. 
For a topological group $G$, 
we denote by 

\begin{itemize}
	\item $G^{\vee} = \Set{\chi \in \Hom(G,\QZ)| \mbox{continuous}}$\,: the Pontrjagin dual group of $G$, and 
	\item $G^{\ab} = G/\ol{[G,G]}$\,: the abelianization of $G$, where $\ol{[G,G]}$ 
	 is the topological closure of the commutator subgroup of $G$. 
\end{itemize}
The structure map $X\to \Spec(k)$ as in \eqref{eq:pi0} induces 
\[
\xymatrix@C=5mm{
  \piXDabg := \Ker(\varphi^{\ab}:\piXDab \ar[r]& \pi_1(\Spec(k))^{\ab} = G_k). 
}
\]

\begin{lem}
	\label{lem:piab}
  Let $X$ be a normal variety over $k$, and 
  $\Xbar$ a compactification of $X$. 
  For any $D\in \DivX$, 
  we have $\# \piXDabg < \infty$.   
\end{lem}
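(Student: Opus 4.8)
The plan is to reduce to the case where $X$ is a smooth curve and then appeal to class field theory for function fields over finite fields. It should be stressed that the finiteness does \emph{not} follow formally from the smallness of $\piXD$ proved in Thm.~\ref{thm:main}: a small profinite abelian group need not be finite (e.g.\ $\Zhat$), so genuine arithmetic input is required. I would first run through the same chain of reductions as in the proof of Thm.~\ref{thm:main}. Each step there furnishes, via Lem.~\ref{lem:pull-back}, a surjection $\pi_1(X',D')\surj\piXD$ (with $\pi_1(X')\to\piX$ an isomorphism or a surjection) compatible with the structure maps to $\Spec(k)$; abelianizing and passing to the kernels of the maps to $G_k$ then gives a surjection $\pi_1(X',D')^{\ab,0}\surj\piXDabg$, so it is enough to treat $X'$. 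Since $X$ is normal it is already integral, so the descent along components is unneeded, and one is reduced to $X$ smooth with $\Xbar$ a normal compactification whose boundary is the support of a Cartier divisor. A Lefschetz--Bertini type argument --- a sufficiently general smooth curve $\phi\colon C\to X$ in $\CuX$ has $\pi_1(C,\phibarastD)^{\ab}\to\piXDab$ surjective on the geometric parts --- then reduces us to the case where $X$ itself is a smooth curve with smooth compactification $\Xbar$.

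For a smooth curve $X$ with $Z=\{z_1,\dots,z_s\}$, geometric class field theory (equivalently, Lem.~\ref{lem:str}) presents $\piXDabg$ as a quotient of the $\operatorname{Frob}$-coinvariants of the abelianized geometric fundamental group of $X$ cut down by the ramification bound $D$. That latter group is assembled from a finitely generated $\Zhat$-module --- its prime-to-$p$ part, together with the $p$-adic Tate module $T_pJ$ of the Jacobian $J$ of $\Xbar$, coming from the everywhere-unramified covers --- and a finite group, namely the local ramification data at the $z_j$ of bound $m_{z_j}$, which is finite by local class field theory (the wild part being controlled by $O_{z_j}^{\times}/(1+\mathfrak m_{z_j}^{\,m_{z_j}+1})$). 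By the Weil conjectures the geometric Frobenius acts on the $\Q$-span of the finitely generated part with eigenvalues that are Weil numbers of nonzero weight, in particular $\neq 1$; hence $\operatorname{Frob}-1$ is invertible there, the $\operatorname{Frob}$-coinvariants of that part are finite (and vanish for all but finitely many $l\neq p$), and therefore $\piXDabg$ is finite.

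The step I expect to be the main obstacle is the reduction from higher dimension to a curve: producing a curve in $X$ whose abelian fundamental group, with the induced ramification bound, surjects onto $\piXDab$ after restriction to the geometric parts is a Lefschetz-type assertion that does not follow from the lemmas already available, and one might instead be forced to invoke unramified-with-modulus class field theory for higher-dimensional varieties over finite fields directly. The second delicate point, already visible on a curve, is the wild $p$-part: one must check that the ramification bound $G_{z}^{\,m_z+}\subset G_L$ (equivalently, that the Swan conductor at $z$ is at most $m_z$) indeed cuts the abelianized wild inertia down to a finite group. The tame part, by contrast, is routine once one grants the Weil bound --- which is in any case already built into Deligne's theorem (Thm.~\ref{thm:Deligne}) used throughout this note.
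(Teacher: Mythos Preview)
Your reductions and the curve-case sketch are sound, and you have correctly identified where the argument is incomplete: the Lefschetz step. That gap is real, and it is not minor --- the assertion that a general curve $\phi:C\to X$ yields a surjection $\pi_1(C,\phibarastD)^{\ab,0}\surj\piXDabg$ is precisely the \emph{Lefschetz theorem for the abelian fundamental group with modulus} of Kerz--Saito \cite{KS}. So your proposed route, once completed, would rest on the same reference as the paper's proof, just invoked at a different point.

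The paper proceeds differently and avoids reducing to a curve. After the same preliminary reductions (blow up so $Z=|E|$, normalize $\Xbar$), it works directly in higher dimension via Pontryagin duality: the canonical injection $(\piXDab)^{\vee}\hookrightarrow\HX$ is shown, using Lem.~\ref{lem:bdd} and the relation $\Ar=\epsilon+\Sw$ of Rem.~\ref{rem:Ar}, to land in the filtered piece $\filDpHX$ for $D'=E_{\red}+D$. Dualizing gives a surjection from $(\filDpHX)^{\vee}$ --- which is exactly the group $\pi_1^{\ab}(\Xbar,D')$ of \cite{KS} --- onto $\piXDab$, compatibly with the maps to $G_k$. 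Then Cor.~1.2 of \cite{KS} gives the finiteness of the geometric part directly, without ever isolating a curve. In effect, your fallback suggestion (``invoke unramified-with-modulus class field theory for higher-dimensional varieties over finite fields directly'') is exactly the route the paper takes; the point you may have missed is that the comparison between the paper's $\piXD$ (bounded ramification in the Galois-theoretic sense of Def.~\ref{def:rambdd}) and the Kerz--Saito group (defined by an Artin-conductor bound) requires the conductor estimate of Lem.~\ref{lem:bdd}, which is where the paper's earlier work pays off.
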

\begin{proof}
  As in the proof of the main theorem (Thm.\ \ref{thm:main}), 
  we reduce the lemma to the case where 
  $\Xbar$ is normal and the boundary 
  $Z = \Xbar\ssm X$ is the support of an effective Cartier divisor on $\Xbar$. 
	
  \sn 
  (\textbf{Reduction to $Z=|E|$ for some $E\in \DivX$)}\, 
  Let $\pbar:\Xpbar \to \Xbar$ be the blowing up  
  along the reduced closed subscheme $Z = \Xbar \ssm X$. 
  As in \eqref{eq:blowup}, 
  there exists a commutative diagram 
  \[
	  \xymatrix@C=15mm{
	    \pi_1(X')^{\ab} \ar@{->>}[d]\ar[r]^{\simeq} & \piXab\ar@{->>}[d]\\
	    \pi_1(X'\subset \Xpbar,{\pbar}^{\ast}D)^{\ab} \ar[r] & \pi_1(X\subset \Xbar,D)^{\ab} 
	  }
  \]
  with $X' = (\pbar)^{-1}(X)$, 
  and the bottom horizontal map becomes surjective.  
  We may assume that $Z = \Xbar \ssm X$ is the support of an effective Cartier divisor on $\Xbar$. 

\sn	
   (\textbf{Reduction to a normal compactification)}\, 
	Let $\fbar:\Xpbar \to \Xbar$ be the normalization. 
	As in \eqref{eq:normal_comp}, 
	the isomorphism $X' := {\fbar}^{-1}(X) \isomto X$ induces 
	a commutative diagram 
	\[
	  \xymatrix@C=15mm{
	    \pi_1(X')^{\ab} \ar@{->>}[d]\ar[r]^{\simeq} & \piXab\ar@{->>}[d]\\
	    \pi_1(X'\subset \Xpbar,{\fbar}^{\ast}D)^{\ab} \ar[r] & \pi_1(X\subset \Xbar,D)^{\ab}.  
	  }
	\]
	The bottom map is surjective so that 
	we may assume that $\Xbar$ is normal 
	and there exists $E\in \DivX$ such that $Z = |E|$. 
	 
	\sn
	(\textbf{Proof of the lemma}) 
	 Recall that there exists a canonical isomorphism 
$(\piXab)^{\vee} \simeq \HX$, where $\HX$ denotes the \'etale cohomology group (\cite{SGA4.5} Exp.~1, Sect.~2.2.1). 
By composing this with the dual of $\piXab \surj \piXDab$  induced from \eqref{eq:pi}, 
we have an injective homomorphism 
\begin{equation}
\label{eq:H1}
\xymatrix@C=5mm{
  \psi: (\piXDab)^{\vee} \ar@{^{(}->}[r] & \HX. 
}
\end{equation}
Following \cite{KS}, Def.~2.4, 
we define 
\[
   \filDHX := \Set{\chi \in \HX | \Ar(\chi) \le D}. 
\] 
Here, the condition ``$\Ar(\chi) \le D$'' is defined by 
$\Ar(\phi^{\ast}\chi) \le \phibarastD$ 
for every $\phi:C\to X\in \CuX$ 
using the Artin conductor $\Ar(\phiast\chi)$ 
of the character 
$\phiast\chi:\pi_1(C)^{\ab} \to \QZ$ 
(which is corresponding to $\phiast\chi \in H^1(C,\QZ)$)  
(\cite{SerCL}, Chap.~IV, Sect.~3).  
We fix $\mathbb{C}^{\times} \simeq (\Qlbar)^{\times}$.  
One can consider $\chi \in (\piXDab)^{\vee}$ 
as a $\Qlbar$-representation (with finite image) 
\[
\xymatrix@C=5mm{
\piX \ar@{->>}[r]& \piXD \ar@{->>} [r]& \piXDab \ar[r]^-{\chi} & \QZ \ar@{^{(}->}[r] &\GL_1(\Qlbar)
}
\] 
of $\piX$, where $\piXD \surj \piXDab$ is the quotient map. 
We denote by $\F_{\chi}$ the corresponding $\Qlbar$-sheaf on $X$ (by \eqref{eq:l-adic}). 
It satisfies $\Sw(\F_{\chi}) \le D$  by Lem.~\ref{lem:bdd}.
For every $\phi:C\to X \in \CuX$,  
we have 
\begin{equation}
	\label{eq:Ar}
	\Ar(\phiast\chi) = \Ar(\phiast \F_{\chi})\ \mbox{defined in Rem.~\ref{rem:Ar}}.
\end{equation}	
Take $E \in \DivX$ such that $Z = |E|$ and we denote by   
$E_{\red}$ the reduced divisor associated to $E$. 
Putting $D' = E_{\red} + D$,  
we obtain 
\begin{align*}
		\Ar(\phi^{\ast}\chi) &=  \Ar(\phi^{\ast}\F_{\chi}) \quad (\mbox{from \eqref{eq:Ar}})\\
		&= \epsilon(\phi^{\ast}\F_{\chi}) + \Sw (\phi^{\ast}\F_{\chi}) \quad (\mbox{from \eqref{eq:epsilon}}) \\
		& \le \phibarast(E_{\red}) + \phibarast(D)\\
		&= \phibarast(D'). 
\end{align*}  
This implies that 
the image of $\psi$ defined in \eqref{eq:H1} 
is contained in $\filDpHX$. 
By taking the dual of $\psi: (\piXDab)^{\vee} \inj \filDpHX$, we have a surjective homomorphism 
$\psi^{\vee}:(\filDpHX)^{\vee} \surj  \piXDab$ using the Pontryagin duality theorem.  
(In fact, the group $(\filDpHX)^{\vee}$ is denoted by $\pi_1^{ab}(\Xbar,D')$ in \cite{KS}.) 
The structure map $X \to \Spec(k)$ 
induces a commutative diagram 
\[
  \xymatrix@C=15mm{
      (\filDpHX)^{\vee} \ar@{->>}[r]^-{\psi^{\vee}}\ar[d] & \piXDab\ar[d]^-{\varphi^{\ab}} \\
      H^1(\Spec(k),\Q/\Z)^{\vee}\ar[r]^-{\simeq} & \pi_1(\Spec(k))^{\ab}.  
  }
\]
Here,  the left vertical map is given by $H^1(\Spec(k),\QZ) \to \HX$.  
As $\Xbar$ is normal, 
the kernel 
of the left vertical map (which is $\pi_1^{ab}(\Xbar,D')^0$ in the sense of \cite{KS}) 
is finite by Cor.~1.2 in \cite{KS}.  
Our claim $\# \piXDabg < \infty$ follows from this and the commutative diagram above. 
\end{proof}

\begin{proof}[Proof of Cor.~\ref{cor:finite}]
As in the proof of Lem.~4.4 in \cite{HH}, 
by induction on $n$, 
we also obtain the finiteness of the quotient 
$\piXD^0/(\piXD^0)^{(n)}$  for any $n\in \Z_{\ge 1}$, 
where $(\piXD^0)^{(n)}$ 
is the $n$-th commutator subgroup 
(\cite{HH}, Def.~4.2).  
Starting from this, 
the same proofs of Thm.~4.5 (ii) and Thm.~4.6 (ii) in \cite{HH} 
work and the assertions follow.  
\end{proof}


\providecommand{\bysame}{\leavevmode\hbox to3em{\hrulefill}\thinspace}
\providecommand{\MR}{\relax\ifhmode\unskip\space\fi MR }
\providecommand{\MRhref}[2]{%
  \href{http://www.ams.org/mathscinet-getitem?mr=#1}{#2}
}
\providecommand{\href}[2]{#2}

\bigskip\noindent
Toshiro Hiranouchi \\
Department of Mathematics, Graduate School of Science, Hiroshima University
1-3-1 Kagamiyama, Higashi-Hiroshima, 739-8526 JAPAN\\
Email address: \texttt{hira@hiroshima-u.ac.jp}

\end{document}